\documentclass[11pt]{amsart}
\usepackage{amsmath,amssymb,amsthm,latexsym,cite,cancel}
\usepackage[small]{caption}
\usepackage{graphicx,wasysym,overpic,tikz,color}
\usepackage[colorlinks=true,urlcolor=blue,
citecolor=blue,linkcolor=blue,linktocpage,pdfpagelabels,
bookmarksnumbered,bookmarksopen]{hyperref}
\usepackage[english]{babel}
\usepackage{units}
\usepackage{enumitem}
\usepackage[left=2.1cm,right=2.1cm,top=2.71cm,bottom=2.71cm]{geometry}
\usepackage{float,bm}
\makeatletter
\def\@seccntformat#1{%
	\protect\textup{\protect\@secnumfont
		\csname the#1\endcsname\protect\enspace}}
\makeatother
\newtheorem{theorem}{Theorem}[section]
\newtheorem{definition}[theorem]{Definition}
\newtheorem{proposition}[theorem]{Proposition}
\newtheorem{lemma}[theorem]{Lemma}

\newtheorem{example}[theorem]{Example}

\newcommand{\abs}[1]{\lvert#1\rvert}
\newcommand{\norm}[1]{\lVert#1\rVert}

\tikzstyle{nodo}=[circle,draw,fill,inner sep=0pt,minimum size=%
1.5mm]

\numberwithin{equation}{section}

\title[Sharp Born--Infeld Sobolev inequality and extremal functions]
{Sharp Born--Infeld Sobolev inequality and extremal functions on the lattice graph $\mathbb Z^N$}
\subjclass[2020]{35R02, 35A15, 26D10, 39A12}

\author[C. Ji]{Chao Ji}

\address[C. Ji]{\newline\indent
	School of Mathematics
	\newline\indent
	East China University of Science and Technology
	\newline\indent
	Shanghai 200237, PR China }
\email{\href{mailto:jichao@ecust.edu.cn}{jichao@ecust.edu.cn}}

\author[K. Sheng]{Kai Sheng}

\address[K. Sheng]{\newline\indent
	School of Mathematics
	\newline\indent
	East China University of Science and Technology
	\newline\indent
	Shanghai 200237, PR China}

\email{\href{mailto:shengkai2001@outlook.com}{shengkai2001@outlook.com}}
\date{\today}
\keywords{Born--Infeld Sobolev inequality, discrete Sobolev inequality, lattice graphs, extremal functions, discrete Schwarz rearrangement}

\begin{document}

\begin{abstract}

In this paper, we study the sharp Sobolev-type inequality associated with
the Born--Infeld energy on the lattice graph \(\mathbb Z^N\), \(N\geq3\):
\[
\frac12\sum_{x\in\mathbb Z^N}\sum_{y\sim x}
\left(1-\sqrt{1-|\nabla_{xy}u|^2}\right)
\geq
C_{N,\alpha}
\left(\sum_{x\in\mathbb Z^N}|u(x)|^\alpha\right)^{\frac{N}{N+\alpha}},
\]
for \(u\in
D^{1,2}(\mathbb Z^N)\cap\ell^\alpha(\mathbb Z^N)
\) satisfying
\(|\nabla_{xy}u|\leq1\) for every \(x\sim y\), where
\(C_{N,\alpha}\) denotes the optimal constant.
We determine the exact positivity threshold and prove that
\(C_{N,\alpha}>0\) if and only if
\(\alpha\geq2^*:=2N/(N-2)\).
At the Sobolev critical exponent \(\alpha=2^*\), we identify the optimal constant as
\[
C_{N,2^*}=\frac12\mathcal S_2,
\]
where \(\mathcal S_2\) is the optimal discrete Sobolev constant, and show
that it is not attained. In the supercritical regime, there exists
\(\varepsilon_0=\varepsilon_0(N)>0\) such that \(C_{N,\alpha}\) is attained
for every \(2^*<\alpha<2^*+\varepsilon_0\), with a nonnegative Schwarz symmetric extremal function. The main compactness difficulties stem from the lack of a suitable scaling on \(\mathbb Z^N\) and from the nonhomogeneity of the Born--Infeld energy. We overcome them by combining
discrete Schwarz rearrangement with a \(Q_1\) discrete-to-continuum
comparison and by establishing a strict separation as the \(\ell^\alpha\)-norm tends to infinity.

\end{abstract}

\maketitle
\section{Introduction}

The Lorentz--Minkowski energy, also known as the electrostatic
Born--Infeld energy, gives rise to the following Sobolev-type
inequality, first studied by Bonheure, De Coster, and Derlet
\cite{BonheureDeCosterDerlet2012}:
\begin{equation}\label{eq:continuous-lorentz-minkowski-inequality}
\int_{\mathbb R^N}
\left(
1-\sqrt{1-|\nabla u|^2}
\right)\,dx
\geq
\widetilde C_{N,\alpha}
\left(
\int_{\mathbb R^N}|u|^\alpha\,dx
\right)^{\frac{N}{N+\alpha}},
\qquad u\in\mathcal X,
\end{equation}
where \(N\geq3\), \(\alpha>2\), and
\[\mathcal X
=
\left\{
u\in D^{1,2}(\mathbb R^N):
\nabla u\in L^\infty(\mathbb R^N,\mathbb R^N),\
\|\nabla u\|_{L^\infty(\mathbb R^N)}\leq1
\right\}.\]
The authors proved that
\eqref{eq:continuous-lorentz-minkowski-inequality} holds with a positive
constant if and only if \(\alpha\geq2^*\). Moreover, the optimal constant
is not attained at the Sobolev critical exponent \(\alpha=2^*\), whereas
it is attained for every \(\alpha>2^*\).
More recently, Bieganowski, Ikoma, and Mederski
\cite{BieganowskiIkomaMederski2025} obtained a new variational proof of inequality~\eqref{eq:continuous-lorentz-minkowski-inequality} in the supercritical regime
and characterized its optimal constant in terms of the ground-state energy level of the corresponding
Born--Infeld equation.

The significance of this extremal problem goes beyond the attainment of the optimal constant. Indeed, in \cite{BonheureDeCosterDerlet2012}, for \(N\geq3\) and \(\alpha>2^*\), the study of extremal functions for
\eqref{eq:continuous-lorentz-minkowski-inequality} plays a key role in the construction of infinitely many radial solutions to the quasilinear equation
\[
-\operatorname{div}\left(
\frac{\nabla u}{\sqrt{1-|\nabla u|^2}}
\right)
=
|u|^{\alpha-2}u
\qquad \text{in } \mathbb R^N.
\]
From a physical viewpoint, Born--Infeld theory was introduced as a
nonlinear modification of Maxwell electrodynamics in order, in
particular, to remove the infinite self-energy of a point charge:
whereas the Coulomb electric field becomes singular at the charge, the
Born--Infeld electrostatic field of a point charge remains bounded and
has finite self-energy
\cite{BornInfeld1934,Kiessling2004,Kiessling2012}. Geometrically, the associated mean-curvature operator is the Lorentzian
counterpart of the minimal-surface operator. In Euclidean space, the
equation for a minimal graph is
\[
\operatorname{div}\left(
\frac{\nabla u}{\sqrt{1+|\nabla u|^2}}
\right)=0.
\]
For comparison, the Lorentz--Minkowski metric is
\[
ds_L^2=dx_1^2+\cdots+dx_N^2-dt^2.
\]
The sign change in the ambient metric is reflected in the area
functional for graphs: its integrand is
\(\sqrt{1+|\nabla u|^2}\) in Euclidean space, whereas for a spacelike
graph in Lorentz--Minkowski space it is
\(\sqrt{1-|\nabla u|^2}\). Consequently, the zero mean-curvature
equation is the maximal hypersurface equation. In this setting, the classical
Bernstein-type theory for entire maximal spacelike hypersurfaces goes
back to Calabi and Cheng--Yau
\cite{Calabi1970,ChengYau1976}. More generally, the same operator with
a nonzero right-hand side gives prescribed mean-curvature equations
for spacelike graphs; see, for instance, Bartnik and Simon
\cite{BartnikSimon1982}. For further developments on existence, multiplicity, ground states,
and regularity for Born--Infeld and Lorentz--Minkowski mean-curvature
equations, we refer to \cite{Azzollini2014,Azzollini2016,BonheureColasuonnoFoldes2019,
BonheureIacopetti2019,BonheureIacopetti2023,MederskiPomponio2023,
BaldelliMederskiPomponio2025,MederskiZeng2026,
ByeonIkomaMalchiodiMari2026,BieganowskiIkomaMederski2025}
and the references therein.

In recent years, functional inequalities on discrete graphs have
received considerable attention and substantial progress has been made
in developing discrete counterparts of classical inequalities. Such
inequalities provide important analytic tools for the study of
extremal and variational problems, as well as discrete nonlinear equations. In particular, Hua and Li \cite{HuaLi2021} studied extremal
functions for discrete Sobolev and Hardy--Littlewood--Sobolev
inequalities on \(\mathbb Z^N\). For \(1\leq p<N\) and
\(q\geq p^*:=Np/(N-p)\), the discrete Sobolev inequality takes the form
\begin{equation}\label{eq:intro-discrete-sobolev-inequality}
\lVert u\rVert_{\ell^q(\mathbb Z^N)}
\leq
\overline C_{N,p,q}
\lVert u\rVert_{D^{1,p}(\mathbb Z^N)},
\qquad
u\in D^{1,p}(\mathbb Z^N).
\end{equation}
They introduced a discrete concentration--compactness principle and then combined it with the homogeneity of \eqref{eq:intro-discrete-sobolev-inequality} to prove the existence of extremal functions for
\eqref{eq:intro-discrete-sobolev-inequality} in the supercritical range
\(q>p^*\). Very recently, He and Ji \cite{HeJi2026} resolved the Sobolev
critical case for \(p=2\): for \(N\geq3\), they proved that the optimal
constant in the critical discrete Sobolev inequality with \(q=2^*\) is
attained on \(\mathbb Z^N\). In a related direction, Hajaiej, Han and Hua
\cite{Hajaiej2026} developed a higher-dimensional discrete Schwarz
rearrangement on \(\mathbb Z^N\), together with the corresponding P\'olya--Szeg\H{o}-type inequalities, providing a useful compactness tool for extremal problems on lattice graphs. For further results on functional
inequalities and related extremal problems in discrete settings, we
refer to
\cite{HuangLiYin2015,Gupta2023,GuptaSteinerberger2024,
HuaLiMunch2025,HanLi2026}
and the references therein. 

Motivated by the works mentioned above, we study the discrete counterpart of the
Born--Infeld Sobolev inequality and its associated extremal problem on
\(\mathbb Z^N\). First, we
determine the exact range of exponents for which the optimal constant is
positive. Second, at the Sobolev critical exponent \(\alpha=2^*\), we
identify the optimal constant in terms of the sharp discrete Sobolev
constant and prove that it is not attained. Third, in the supercritical
regime, we establish attainment for all \(\alpha>2^*\) sufficiently close
to \(2^*\), and show that an extremal function can be chosen nonnegative
and Schwarz symmetric. 

More precisely, in this paper we consider the following inequality:
\begin{equation}\label{eq:sobolev-type-inequality}
\frac12\sum_{x\in\mathbb Z^N}\sum_{y\sim x}
\left(
1-\sqrt{1-|\nabla_{xy}u|^2}
\right)
\geq
C_{N,\alpha}
\left(
\sum_{x\in\mathbb Z^N}|u(x)|^\alpha
\right)^{\frac{N}{N+\alpha}},
\qquad u\in X\cap\ell^\alpha(\mathbb Z^N),
\end{equation}
where \(N\geq3\), \(\alpha>2\), and
\[
X
:=
\left\{
u\in D^{1,2}(\mathbb Z^N):
|\nabla_{xy}u|\leq1
\ \text{for every }x\sim y
\right\}.
\]
The restriction \(u\in X\cap\ell^\alpha(\mathbb Z^N)\) guarantees that the
right-hand side of \eqref{eq:sobolev-type-inequality} is finite for every
\(\alpha>2\). The issue arises in the subcritical range
\(2<\alpha<2^*\), where the embedding
\(D^{1,2}(\mathbb Z^N)\hookrightarrow\ell^\alpha(\mathbb Z^N)\)
fails. For \(\alpha\geq2^*\), the discrete Sobolev inequality gives
\(X\subset\ell^\alpha(\mathbb Z^N)\), so the restriction is redundant in this case. In the sequel, \(x\sim y\) means that \(x\) and \(y\)
are adjacent in \(\mathbb Z^N\). For an adjacent ordered pair \((x,y)\),
we write \(\nabla_{xy}u:=u(y)-u(x)\) for the discrete edge difference.
We denote by \(\ell^2(\mathbb Z^N)\) the space of
\(\ell^2\)-summable functions on
\(\mathbb Z^N\) and by \(D^{1,2}(\mathbb Z^N)\) the completion of
finitely supported functions in the \(D^{1,2}\)-norm; see
Section~\ref{2} for details.

The optimal constant in \eqref{eq:sobolev-type-inequality} is given by
\[C_{N,\alpha}
:=
\inf_{u\in (X\cap\ell^\alpha(\mathbb Z^N))\setminus\{0\}}
\frac{
	\displaystyle
	\frac12\sum_{x\in\mathbb Z^N}\sum_{y\sim x}
	\left(
	1-\sqrt{1-|\nabla_{xy}u|^2}
	\right)
}{
	\displaystyle
	\left(
	\sum_{x\in\mathbb Z^N}|u(x)|^\alpha
	\right)^{\frac{N}{N+\alpha}}
}.\]

For \(2\leq p<N\), we recall that the optimal constant in the discrete
Sobolev inequality is given by
\[
\mathcal S_p
:=
\inf_{u\in D^{1,p}(\mathbb Z^N)\setminus\{0\}}
\frac{
\displaystyle
\frac12\sum_{x\in\mathbb Z^N}\sum_{y\sim x}
|\nabla_{xy}u|^p
}{
\displaystyle
\|u\|_{\ell^{p^*}(\mathbb Z^N)}^p
}
>0,
\qquad
p^*:=\frac{Np}{N-p}.
\]
For every \(p\geq2\), we also define
\[
\kappa_p
:=
\inf_{0<s\leq1}
\frac{1-\sqrt{1-s}}{s^{p/2}}
>0.
\]
By the definition of \(\kappa_p\), we have
\begin{equation}\label{eq:kappa-lower-bound}
1-\sqrt{1-s}
\geq
\kappa_p s^{p/2},
\qquad
s\in[0,1].
\end{equation}

For later use, we write
\[
 p_\alpha:=\frac{N\alpha}{N+\alpha}.
\]
Notice that \(\alpha\geq2^*\) is equivalent to \(p_\alpha\geq2\), and
\(p_\alpha^*=\alpha\). Moreover, \eqref{eq:kappa-lower-bound} and
\(s^{p/2}\leq s\) on \([0,1]\) show that
\[
 \kappa_p\geq\frac12,\qquad p\geq2.
\]

Now we state our main results. To determine when inequality~\eqref{eq:sobolev-type-inequality} holds for some positive constant, we first establish the following estimate.

\begin{theorem}\label{thm:subcritical}
Let \(N\geq3\) and \(\alpha>2\). Then
\(C_{N,\alpha}>0\) if and only if
\(\alpha\geq2^*:=\frac{2N}{N-2}\).
More precisely, if \(\alpha\geq2^*\), then
\[
0<
\kappa_{p_\alpha}\mathcal S_{p_\alpha}
\leq
C_{N,\alpha}
<
+\infty.
\]
\end{theorem}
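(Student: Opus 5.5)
Three things have to be established: the lower bound $\kappa_{p_\alpha}\mathcal S_{p_\alpha}\le C_{N,\alpha}$ for $\alpha\ge 2^*$, finiteness of $C_{N,\alpha}$, and $C_{N,\alpha}=0$ for $2<\alpha<2^*$.

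\emph{Lower bound.} Let $\alpha\ge2^*$, so $p:=p_\alpha\ge2$ and $p^*=\alpha$. Take $u\in X\setminus\{0\}$. Each $s=|\nabla_{xy}u|^2$ lies in $[0,1]$, so \eqref{eq:kappa-lower-bound} applies edge by edge:
\[
\frac12\sum_x\sum_{y\sim x}\Bigl(1-\sqrt{1-|\nabla_{xy}u|^2}\Bigr)\ge \kappa_p\,\frac12\sum_x\sum_{y\sim x}|\nabla_{xy}u|^p .
\]
The right-hand side is finite, because $|\nabla_{xy}u|\le1$ and $p\ge2$ give $|\nabla_{xy}u|^p\le|\nabla_{xy}u|^2$. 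Hence $u\in D^{1,p}(\mathbb Z^N)$, since $D^{1,2}\subset D^{1,p}$ for $p\ge 2$ in the discrete setting; this uses that finitely supported approximants in $D^{1,2}$ also converge in $D^{1,p}$, by the same pointwise domination.

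By the definition of $\mathcal S_p$, the right-hand side is at least $\kappa_p\mathcal S_p\|u\|_{\ell^{\alpha}}^{p}$. Since
\[
\|u\|_{\ell^\alpha}^{p}=\Bigl(\sum|u|^\alpha\Bigr)^{p/\alpha}=\Bigl(\sum|u|^\alpha\Bigr)^{N/(N+\alpha)},
\]
we obtain $C_{N,\alpha}\ge\kappa_p\mathcal S_p>0$. Here $\mathcal S_p>0$ is the recalled discrete Sobolev inequality with $2\le p<N$. Note that $p<N$ always holds, because $p_\alpha=N\alpha/(N+\alpha)<N$.

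\emph{Finiteness.} Test the quotient on $u=\frac12\delta_0$. It lies in $X$, its numerator is finite, and its denominator is positive, so $C_{N,\alpha}<\infty$.

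\emph{Vanishing for $2<\alpha<2^*$.} This is the main step. The lattice has no dilations, so I would use spread-out profiles of small amplitude. Take $u_{R,t}(x)=t\,\varphi(x/R)$ with $\varphi\in C_c^\infty(\mathbb R^N)$ fixed and nonzero. For $R$ large and $t\le R$, every edge difference is $O(t/R)$, so $u_{R,t}\in X$. Using $1-\sqrt{1-s}\le s$ on $[0,1]$, the numerator is at most
\[
\frac12\sum_x\sum_{y\sim x}|\nabla_{xy}u_{R,t}|^2\le C\,t^2R^{N-2}.
\]
By Riemann-sum approximation, the denominator is at least $c\,(t^\alpha R^N)^{N/(N+\alpha)}$ for large $R$. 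The quotient is therefore bounded by
\[
C\,t^{\,2-\frac{N\alpha}{N+\alpha}}\;R^{\,N-2-\frac{N^2}{N+\alpha}}.
\]

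One option is $t=1$. Then the exponent of $R$ is
\[
N-2-\frac{N^2}{N+\alpha}=\frac{N\alpha-2N-2\alpha}{N+\alpha},
\]
which is negative exactly when $\alpha(N-2)<2N$, i.e. $\alpha<2^*$. So the quotient tends to $0$ as $R\to\infty$, and this alone suffices.

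If one prefers to let $t\to0$ instead, the $t$-exponent $2-p_\alpha$ is positive when $\alpha<2^*$, so small $t$ also works; fixing $R$ large and sending $t\to0$ gives the same conclusion.

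The only point needing care is the two-sided Riemann-sum comparison of the discrete Dirichlet energy and $\ell^\alpha$ norm with their continuum counterparts, and this is routine for smooth compactly supported $\varphi$.
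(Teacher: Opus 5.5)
Your proof is correct. For the lower bound and for finiteness it is essentially the paper's argument: Lemma~\ref{lem:estimate}, and the test function $\mathbf 1_{\{0\}}$, which gives $C_{N,\alpha}\le 2N$.

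The only difference in the lower bound is how you obtain $u\in D^{1,p_\alpha}(\mathbb Z^N)$. You use the edgewise embedding $\|v\|_{D^{1,p}}\le\|v\|_{D^{1,2}}$ for $p\ge2$. The paper instead uses $u\in\ell^{\alpha}$ together with the cutoff characterization of Lemma~\ref{lem:finite-energy-characterization}. Your route is slightly more direct. State it as the $\ell^2\subset\ell^p$ norm inequality on edges, not as ``the same pointwise domination.'' That domination applies to $u_n-u$ only once its edge differences are at most $1$, which is true as soon as $\|u_n-u\|_{D^{1,2}}\le1$.

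For the vanishing step, the paper uses exactly your second option. It fixes $u\in X\cap C_c(\mathbb Z^N)\setminus\{0\}$, sets $u_k=u/k$, and gets the exact bound $k^{p_\alpha-2}\|u\|_{D^{1,2}}^2/\|u\|_{\ell^\alpha}^{p_\alpha}\to0$, with no Riemann sums. So the absence of dilations on $\mathbb Z^N$ is not really an obstacle here. Amplitude scaling alone exploits the mismatch between the quadratic small-gradient behavior of the Born--Infeld density and the power $p_\alpha<2$.

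Your primary route ($t=1$, $R\to\infty$) is also valid and is the lattice analogue of the continuum dilation argument. It shows the quotient degenerates even at fixed amplitude, at the cost of the Riemann-sum lower bound for $\sum_x|\varphi(x/R)|^\alpha$.

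One minor slip: $t\le R$ does not by itself give $u_{R,t}\in X$. The correct condition is $t\|\nabla\varphi\|_\infty\le R$, which holds in both regimes you actually use.
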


After identifying the range \(\alpha\geq2^*\) in which
\(C_{N,\alpha}\) is positive, we turn to the corresponding extremal
problem. We first consider the Sobolev critical case \(\alpha=2^*\), where
we characterize the optimal constant \(C_{N,2^*}\) in terms of the discrete
Sobolev constant \(\mathcal S_2\) and prove that it is not attained.

\begin{theorem}\label{thm:main}
Let \(N\geq3\). Then
\[
C_{N,2^*}
=
\frac12\mathcal S_2,
\]
and \(C_{N,2^*}\) is not attained.
\end{theorem}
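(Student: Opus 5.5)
We establish the identity $C_{N,2^*}=\frac12\mathcal S_2$ through two matching inequalities, and then derive non-attainment from the strictness of the pointwise bound behind the lower one. Throughout, $\alpha=2^*$. In that case $p_\alpha=\frac{N2^*}{N+2^*}=2$, and the exponent $\frac{N}{N+2^*}$ equals $\frac{N-2}{2N}\cdot 2=\frac{2}{2^*}$. The right-hand side of \eqref{eq:sobolev-type-inequality} is therefore $C_{N,2^*}\|u\|_{\ell^{2^*}}^2$.

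\emph{Lower bound.} We use the elementary inequality $1-\sqrt{1-s}\geq \frac s2$ on $[0,1]$, which holds because $\sqrt{1-s}\leq 1-\frac s2$. Equality occurs only at $s=0$, since $(1-\frac s2)^2=1-s+\frac{s^2}{4}>1-s$ for $s>0$. In particular $\kappa_2=\frac12$, and Theorem~\ref{thm:subcritical} gives $C_{N,2^*}\geq\kappa_2\mathcal S_2=\frac12\mathcal S_2$. Applying the pointwise bound edge by edge, we obtain for every $u\in X$
\[
\frac12\sum_{x}\sum_{y\sim x}\Bigl(1-\sqrt{1-|\nabla_{xy}u|^2}\Bigr)\geq \frac12\cdot\frac12\sum_x\sum_{y\sim x}|\nabla_{xy}u|^2\geq \frac12\mathcal S_2\|u\|_{\ell^{2^*}}^2 .
\]

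\emph{Upper bound.} We test the quotient with small multiples $tv$, where $v\neq0$ is finitely supported and $t\to0^+$. Once $t$ is small, $|\nabla_{xy}(tv)|\leq1$ on every edge, so $tv\in X$. The expansion $1-\sqrt{1-s}=\frac s2+O(s^2)$ as $s\to0$ shows that the numerator equals $\frac{t^2}{2}\cdot\frac12\sum\sum|\nabla_{xy}v|^2+O(t^4)$. The only edges contributing are the finitely many near $\operatorname{supp}v$, so the error term is a finite sum and stays $O(t^4)$. The denominator is $t^2\|v\|_{\ell^{2^*}}^2$. Letting $t\to0$ gives
\[
C_{N,2^*}\leq \frac12\cdot\frac{\frac12\sum\sum|\nabla v|^2}{\|v\|_{\ell^{2^*}}^2}.
\]
Finitely supported functions are dense in $D^{1,2}(\mathbb Z^N)$, and the discrete Sobolev inequality makes the Sobolev quotient continuous under this approximation. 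Taking the infimum over $v$ therefore yields $C_{N,2^*}\leq\frac12\mathcal S_2$.

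\emph{Non-attainment.} Suppose $u\in X\setminus\{0\}$ attains the infimum. Both inequalities in the chain displayed under the lower bound must then be equalities. Equality in the first one forces $1-\sqrt{1-s}=\frac s2$ for each $s=|\nabla_{xy}u|^2$, and hence $\nabla_{xy}u=0$ on every edge, because the pointwise inequality is strict for $s>0$ and all terms are nonnegative. Since $\mathbb Z^N$ is connected, $u$ is then constant. A constant function in $D^{1,2}(\mathbb Z^N)$ lies in $\ell^{2^*}$ by the Sobolev inequality, so it must be $0$. This contradicts $u\neq0$.

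The only delicate point is the upper bound: one must check that the $O(s^2)$ remainder is uniform, which holds because $s\leq t^2\max|\nabla v|^2\to0$, and must justify the density step that passes from finitely supported $v$ to all of $D^{1,2}$. Neither step uses attainment of $\mathcal S_2$, so the argument avoids the He--Ji result.
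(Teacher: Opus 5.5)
Your proof is correct and follows essentially the same route as the paper. The lower bound comes from $1-\sqrt{1-s}\geq s/2$ (i.e.\ $\kappa_2=\tfrac12$), the upper bound from small multiples $\varepsilon u$, and non-attainment from the strictness of that pointwise inequality on an edge where $\nabla_{xy}u\neq0$. The only difference is cosmetic: you test with finitely supported $v$ and pass to $D^{1,2}(\mathbb Z^N)$ by density, while the paper tests directly with arbitrary $u\in D^{1,2}(\mathbb Z^N)$, using $\|\nabla_E u\|_{\ell^\infty(E)}\leq\|u\|_{D^{1,2}(\mathbb Z^N)}$ to get $\varepsilon u\in X$ and dominated convergence to pass to the limit.
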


It is worth pointing out that this nonattainment is specific to the
Born--Infeld functional. Indeed, the optimal constant \(\mathcal S_2\)
in the underlying critical discrete Sobolev inequality is attained
\cite{HeJi2026}, whereas Theorem~\ref{thm:main} shows that
\(C_{N,2^*}=\frac12\mathcal S_2\) is not attained. Thus the failure of
attainment does not come from the critical discrete Sobolev problem
itself, but from the strictly nonquadratic structure of the Born--Infeld
energy. A key step in the critical nonattainment argument of Bonheure,
De Coster, and Derlet \cite{BonheureDeCosterDerlet2012} is the scaling technique
\[
u(x)\longmapsto u_t(x):=t^{N/\alpha}u(tx),
\]
which preserves the \(L^\alpha\)-norm. Under the assumption that the optimal constant is attained, they exploit minimality along this scaling family to derive a contradiction. However, this scaling technique is not available in the discrete setting. To overcome this
difficulty, we exploit the following critical feature of our problem. At the threshold \(\alpha=2^*\), we have
\(p_\alpha=2\). This identity shows that the Sobolev
critical exponent is exactly the point at which the power in the discrete
Sobolev estimate matches the quadratic small-gradient behavior of
\(1-\sqrt{1-|\nabla u|^2}\).

At the Sobolev critical exponent, we use
\(1-\sqrt{1-r^2}=\frac12r^2+O(r^4)\) as \(r\to0\) to obtain the upper bound $
C_{N,2^*}\leq \frac12\mathcal S_2$. Indeed, for a fixed
\(u\in D^{1,2}(\mathbb Z^N)\), the scaled functions \(\varepsilon u\) lie in
\(X\) for sufficiently small \(\varepsilon>0\) (see Section~\ref{sec:critical-subcritical}), and
\[
\frac{1}{2\varepsilon^2}
 \sum_{x\in\mathbb Z^N}\sum_{y\sim x}
 \left(1-\sqrt{1-\varepsilon^2|\nabla_{xy}u|^2}\right)
 \longrightarrow \frac12\|u\|_{D^{1,2}(\mathbb Z^N)}^2.
\]
The reverse bound follows from the elementary inequality
\(1-\sqrt{1-s}\geq s/2\). Since this inequality is strict for \(s>0\), the sharp critical constant cannot be attained by a nonzero function. 

In contrast to the Sobolev critical case, in the Sobolev supercritical case we
obtain the existence of extremal functions. Specifically, we prove
that \(C_{N,\alpha}\) is attained when \(\alpha>2^*\) is sufficiently
close to \(2^*\).

\begin{theorem}\label{achieved}
Let \(N\geq3\). There exists \(\varepsilon_0=\varepsilon_0(N)>0\) such
that, for every \(2^*<\alpha<2^*+\varepsilon_0\), the optimal constant
\(C_{N,\alpha}\) is attained by a nonnegative Schwarz symmetric function.
\end{theorem}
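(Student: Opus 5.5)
We will prove Theorem~\ref{achieved} by the direct method applied to a minimizing sequence, made compact through symmetrization. The two ways compactness can fail must be ruled out: vanishing (mass escaping to infinity) and blow-up of the \(\ell^\alpha\)-norm. Write
\[
E(u)=\tfrac12\sum_x\sum_{y\sim x}\bigl(1-\sqrt{1-|\nabla_{xy}u|^2}\bigr).
\]
The quotient \(E(u)/\|u\|_\alpha^{p_\alpha}\) is not homogeneous, so we cannot normalize the \(\ell^\alpha\)-norm. Instead we introduce the constrained level
\[
m(\lambda):=\inf\{E(u): u\in X,\ \|u\|_\alpha^\alpha=\lambda\},
\qquad\text{so that}\qquad
C_{N,\alpha}=\inf_{\lambda>0} m(\lambda)\lambda^{-N/(N+\alpha)}.
\]

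\textbf{Step 1 (symmetrization).} Replacing \(u\) by \(|u|\) does not increase \(|\nabla_{xy}u|\). We then apply the discrete Schwarz rearrangement of Hajaiej--Han--Hua. The map \(r\mapsto 1-\sqrt{1-r^2}\) is convex and increasing on \([0,1]\), so the P\'olya--Szeg\H{o}-type inequality should give \(E(u^*)\le E(u)\), with \(u^*\in X\) and \(\|u^*\|_\alpha=\|u\|_\alpha\). We will check that the rearrangement preserves the constraint \(|\nabla u^*|\le1\); this should follow from the edgewise form of the P\'olya--Szeg\H{o} inequality. Hence minimizing sequences may be taken nonnegative, Schwarz symmetric and nonincreasing along the rearrangement order. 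For such functions a Strauss-type decay bound \(u(x)\le C\|u\|_\alpha|x|^{-N/\alpha}\) holds, so they cannot vanish by translation.

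\textbf{Step 2 (small \(\lambda\)).} We show that \(\lambda\to0\) is not a minimizing regime once \(\alpha\) is close to \(2^*\). As \(\lambda\to0\), Sobolev forces small gradients, so \(E\approx\frac12\|u\|_{D^{1,2}}^2\). The natural comparison is then the discrete Sobolev quotient, in which \(E(u)/\|u\|_\alpha^{p_\alpha}\) behaves like \(\frac12\|u\|_{D^{1,2}}^2\lambda^{-N/(N+\alpha)}\). Since \(p_\alpha>2\), this blows up as \(\lambda\to0\), because \(\|u\|_{D^{1,2}}^2\gtrsim\lambda^{2/\alpha}\) by the embedding \(D^{1,2}\hookrightarrow\ell^\alpha\). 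So minimizing sequences have \(\lambda\) bounded below.

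\textbf{Step 3 (large \(\lambda\): the main obstacle).} We must show strict separation
\[
\liminf_{\lambda\to\infty} m(\lambda)\lambda^{-N/(N+\alpha)}>C_{N,\alpha}.
\]
For large \(\lambda\), symmetric functions with \(|\nabla u|\le1\) resemble spread-out profiles at scale \(R\sim\lambda^{1/(N+\alpha)}\). Via the \(Q_1\) (multilinear) interpolation, the discrete quotient converges to a continuum Born--Infeld quotient, with the edge-energy matching \(\int(1-\sqrt{1-|\nabla u|^2})\) up to a lattice-to-continuum factor. So the limit of \(m(\lambda)\lambda^{-N/(N+\alpha)}\) is at least \(\widetilde C_{N,\alpha}\), or the correct lattice analogue \(\widetilde C'_{N,\alpha}\) involving \(\ell^1\)-type sums of coordinate derivatives.

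We then exhibit a competitor with a strictly smaller quotient. For \(\alpha\) near \(2^*\), take \(\varepsilon v\), where \(v\) is the \(\mathcal S_2\)-extremal of He--Ji. Its quotient is close to \(\frac12\mathcal S_2\cdot\lambda^{\text{small power}}\), which by continuity in \(\alpha\) is near \(\frac12\mathcal S_2\). By contrast, the continuum-type limit tends to a constant of the form \(\frac12 S_2^{\mathrm{cont,lattice}}\), and the known strict inequality between discrete and continuum Sobolev constants (discrete strictly better, which is exactly what drives He--Ji's attainment) gives the gap. This is where \(\varepsilon_0\) arises: the gap exists at \(\alpha=2^*\), and we propagate it to nearby \(\alpha\) by continuity of both sides in \(\alpha\). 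This step needs uniform-in-\(\alpha\) control of the \(Q_1\) comparison errors, which are \(O(1/R)\). We expect it to be the hardest part.

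\textbf{Step 4 (conclusion).} With \(\lambda_n\) confined to a compact subinterval of \((0,\infty)\), the sequence \(u_n\) is bounded in \(D^{1,2}\). We extract a pointwise limit \(u\), which again lies in \(X\) and is nonnegative and symmetric. The decay bound from Step 1 gives \(\ell^\alpha\)-convergence, since \(\alpha>2^*\) and the tails are uniformly small. \(E\) is lower semicontinuous by Fatou's lemma, so \(u\) attains \(C_{N,\alpha}\).
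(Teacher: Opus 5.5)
Your architecture is the paper's. You use constrained levels $m(t)$ with $C_{N,\alpha}=\inf_{t>0}m(t)/t^{p_\alpha}$ and a P\'olya--Szeg\H{o} inequality for $J$ to reduce to Schwarz symmetric functions. You exclude $t\to0^+$ by the discrete Sobolev inequality and $p_\alpha>2$, exclude $t\to\infty$ by a $Q_1$ comparison with the Euclidean Sobolev inequality, and rely on a strict gap that is available only for $\alpha$ near $2^*$.

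The genuine difference is where the gap comes from. You take small multiples $\varepsilon v$ of an (almost) optimizer for $\mathcal S_2$ to get $\limsup_{\alpha\downarrow2^*}C_{N,\alpha}\le\frac12\mathcal S_2$, then invoke He--Ji's strict inequality $\mathcal S_2<S_2^{\mathbb R^N}$. The paper deliberately avoids that citation. It tests with the single bump $v=\mathbf 1_{\{0\}}+\frac1{2N}\sum_{i=1}^N\mathbf 1_{\{e_i,-e_i\}}$, scaled by $p_\alpha-2$, to get $C_{N,\alpha}<\frac{2N-1}{2}$, and uses Talenti's formula to show $S_2^{\mathbb R^N}>2N-1$. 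Since $\norm{v}_{D^{1,2}}^2=2N-1$ and $\norm{v}_{2^*}>1$, this in fact reproves $\mathcal S_2<S_2^{\mathbb R^N}$ by hand. So your route is shorter but rests on an external theorem, while the paper's is self-contained and quantitative.

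Two remarks on your version of the upper bound:
\begin{itemize}
\item Attainment of $\mathcal S_2$ plays no role; a near-optimizer suffices because the inequality is strict.
\item You must fix $\varepsilon$ before letting $\alpha\downarrow2^*$, or couple them as the paper does with $\varepsilon=p_\alpha-2$. The factor $\varepsilon^{2-p_\alpha}$ blows up as $\varepsilon\to0$ at fixed $\alpha$.
\end{itemize}
Your final step, passing to the limit directly in a symmetrized minimizing sequence once $\norm{u_n}_\alpha$ is confined to a compact interval, is a valid shortcut for the paper's constrained minimizers plus lower semicontinuity of $t\mapsto m(t)/t^{p_\alpha}$.

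Step 3's lower bound is only heuristic as written, and partly off target. The $Q_1$ comparison cannot give $\widetilde C_{N,\alpha}$. For $u\in X$ one only has $\abs{\partial_iIu}\le1$, so $\abs{\nabla Iu}$ can reach $\sqrt N$ and $Iu$ need not be admissible for the continuum Born--Infeld problem. At best you get your anisotropic variant, whose behaviour in $\alpha$ is not evident. You also need neither a profile at scale $R$ nor uniformity in $\alpha$. The paper proceeds as follows:
\begin{itemize}
\item It bounds $J(u)\ge\kappa_{p_\alpha}\frac12\sum_x\sum_{y\sim x}\abs{\nabla_{xy}u}^{p_\alpha}$.
\item It applies $\int\abs{\nabla Iu}^{p}\le\frac{N^{p/2-1}}{2}\sum_x\sum_{y\sim x}\abs{\nabla_{xy}u}^{p}$ and the $p_\alpha$-Sobolev inequality on $\mathbb R^N$.
\item It compares norms via $\norm{Iu_n-Pu_n}_{L^\alpha}^\alpha\le N^{\alpha-1}m(t_n)=O(t_n^{p_\alpha})=o(t_n^\alpha)$, along a sequence realizing a finite $\liminf$. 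This uses only $\abs{\nabla_{xy}u_n}\le1$.
\end{itemize}
For each fixed $\alpha$ this yields $\liminf_{t\to\infty}m(t)/t^{p_\alpha}\ge\kappa_{p_\alpha}N^{1-p_\alpha/2}S_{p_\alpha}^{\mathbb R^N}$. This explicit bound tends to $\frac12S_2^{\mathbb R^N}$, which is what makes your ``continuity in $\alpha$'' rigorous on the large-$t$ side.

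Two smaller repairs are needed:
\begin{itemize}
\item The decay bound giving uniformly small $\ell^\alpha$ tails must be stated in terms of $\norm{u}_{2^*}$, controlled by the $D^{1,2}$ bound, not $\norm{u}_\alpha$. With $\norm{u}_\alpha$ you only get $u^\alpha\lesssim\abs{x}^{-N}$, which is not summable.
\item P\'olya--Szeg\H{o} cannot be applied to $1-\sqrt{1-s^2}$ directly, since it is undefined beyond $1$. The paper instead uses convex extensions $H_n$ whose blow-up beyond $1$ forces $\abs{\nabla_{xy}u^*}\le1$.
\end{itemize}
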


Theorem~\ref{achieved} is the main compactness result of the paper. For its proof, the scaling technique commonly used in the analysis on \(\mathbb R^N\) is no longer applicable on \(\mathbb Z^N\). Two distinct
compactness issues arise. The first is the usual loss of compactness under
lattice translations. At each fixed \(\ell^\alpha\)-constraint level, this difficulty
can be handled by discrete Schwarz rearrangement. The second is specific to
the nonhomogeneity of the Born--Infeld energy
\[
J(u):=\frac12\sum_{x\in\mathbb Z^N}\sum_{y\sim x}
\left(1-\sqrt{1-|\nabla_{xy}u|^2}\right).
\]
Moreover, in contrast with the homogeneous discrete
Sobolev problem, the \(\ell^\alpha\)-constraint cannot be normalized by
amplitude scaling. Consequently, along a minimizing sequence for
\(C_{N,\alpha}\), the \(\ell^\alpha\)-norm may tend either to zero or
to infinity. We therefore separate the proof into compactness at a fixed
constraint level and the genuinely nonhomogeneous analysis of the regimes
where the constraint level tends to \(0\) or \(+\infty\).

Discrete Schwarz rearrangement itself is an established
tool and has already been used in related lattice extremal and constrained
problems; see \cite{HanLi2026,HeJiTao2025}. Here it yields the Born--Infeld
P\'olya--Szeg\H{o}-type inequality
\[
 J(u^*)\leq J(u),
\]
which permits us to work with Schwarz symmetric functions and recover
compactness for each fixed constraint. The new global step begins by
introducing, for \(t>0\),
\[
m(t):=\inf\left\{J(u):u\in X,\
\|u\|_{\ell^\alpha(\mathbb Z^N)}=t\right\},
\]
and rewriting the optimal constant as
\[
C_{N,\alpha}=\inf_{t>0}\frac{m(t)}{t^{p_\alpha}}.
\]
The remaining issue is to prevent a minimizing sequence \(\{t_n\}\) for \(m(t)/t^{p_\alpha}\) from
escaping to \(0\) or \(+\infty\). The behavior as \(t\to0^+\) follows
from the discrete Sobolev inequality.
To control the large-\(t\) regime, we use the \(Q_1\)-interpolation
considered in \cite{OrtnerShapeev2012} to compare lattice functions
with the continuous Sobolev inequality. Related discrete-to-continuum
comparisons have recently been developed by He and Ji
\cite{HeJi2026} in their study of singular limits on lattice graphs.
There, the edge length tends to zero, whereas in the present problem
the edge length is fixed and the continuum comparison is used to
analyze the large-\(t\) regime \(t\to\infty\).

Through this discrete-to-continuum comparison, the argument exploits a strict separation between
the discrete critical Sobolev level governing the variational problem and
the continuous Sobolev level arising in the limit \(t\to\infty\). In
particular, He and Ji \cite{HeJi2026} proved the strict comparison
\(\mathcal S_2<S_2^{\mathbb R^N}\).
Our proof of
Theorem~\ref{achieved} is independent of this work and gives a
self-contained quantitative separation. The restriction
\(2^*<\alpha<2^*+\varepsilon_0\) enters precisely at this comparison step:
the discrete test-function upper bound is compared with the large-\(t\)
lower bound, and the strict separation is obtained by using
\(p_\alpha\to2\) as \(\alpha\downarrow2^*\). Thus compactness at each fixed
constraint level holds for every \(\alpha>2^*\), whereas the present
argument rules out the possibility \(t\to\infty\) only when \(\alpha\) is
sufficiently close to \(2^*\). A suitable discrete test function places \(C_{N,\alpha}\)
strictly below an intermediate level, while the \(Q_1\)
discrete-to-continuum comparison places the asymptotic level as \(t\to\infty\)
strictly above the same level. Consequently,
\[
C_{N,\alpha}
<
\liminf_{t\to\infty}\frac{m(t)}{t^{p_\alpha}},
\]
which is the strict variational gap that rules out the possibility that the constraint parameter tends to \(+\infty\). Together with the estimate as \(t\to0^+\), this yields
the attainment asserted in Theorem~\ref{achieved}. See
Section~\ref{sec:supercritical} for details.

\noindent\textbf{Open problem.}
In the continuous problem \eqref{eq:continuous-lorentz-minkowski-inequality},
the optimal constant is attained for every \(\alpha>2^*\). In the discrete
problem, the fixed-constraint compactness argument applies for every
\(\alpha>2^*\), but the strict variational gap needed to exclude
\(t\to\infty\) is established here only in the near-critical range.
It therefore remains open whether \(C_{N,\alpha}\) is attained on
\(\mathbb Z^N\) for every \(\alpha>2^*\).

The paper is organized as follows. Section~\ref{2} collects the discrete
Sobolev, Born--Infeld, and rearrangement preliminaries. Section~\ref{sec:critical-subcritical}
proves the positivity threshold and the critical sharp constant. In
Section~\ref{sec:supercritical} we study the constrained levels \(m(t)\),
establish compactness at a fixed constraint level, develop the \(Q_1\) discrete-to-continuum
estimates, and prove the strict variational gap leading to Theorem~\ref{achieved}.

\noindent\textbf{Notation:}
Unless otherwise stated, we use the following notation:
\begin{itemize}
    \item
   \(C,C_1,C_2,\ldots\) denote positive constants whose values are not
    relevant.

    \item
    \(2^*:=2N/(N-2)\) if \(N\geq3\).

    \item
    For a set \(A\), \(\mathbf 1_A\) denotes the characteristic function
    of \(A\).
\end{itemize}

\section{Preliminaries}\label{2}

In this section, we introduce the notation and collect the preliminary
results used in the proofs of our main theorems. We first recall the lattice graph, some related function spaces and the discrete Sobolev inequality,
then study the admissible set \(X\) and the energy functional \(J\), and
finally recall the discrete Schwarz rearrangement and the compactness
property needed in the Sobolev supercritical case.

\subsection{The lattice graph and basic notation}

We regard the \(N\)-dimensional integer lattice graph
\(\mathbb Z^N\) as the graph \(G=(V,E)\), where
\[
	V:=\mathbb Z^N
\]
is the vertex set and
\[
	E
	:=
	\left\{
		(x,y)\in\mathbb Z^N\times\mathbb Z^N:
		\sum_{i=1}^N\abs{x^{(i)}-y^{(i)}}=1
	\right\}
\]
is the edge set. Throughout this paper, we regard \(E\) as a subset of
\(V\times V\). Thus, if \((x,y)\in E\), then \((y,x)\in E\) as well,
and these two ordered pairs represent the two orientations of the same
undirected edge. Two vertices \(x,y\in\mathbb Z^N\) are called
neighbors, denoted by \(x\sim y\), if \((x,y)\in E\). The
combinatorial distance \(d\) is defined by
\[
	d(x,y)
	:=
	\inf\left\{
		k:x=x_0\sim x_1\sim\cdots\sim x_k=y
	\right\}.
\]
We denote the space of functions on \(\mathbb Z^N\) by
\(C(\mathbb Z^N)\). For \(u\in C(\mathbb Z^N)\), its support set is
defined as
\(\operatorname{supp}(u)
:=
\{x\in\mathbb Z^N:u(x)\neq0\}\).
Let \(C_c(\mathbb Z^N)\) be the set of all functions with finite support.
For \(u\in C(\mathbb Z^N)\), the \(\ell^p\)-norm of \(u\) is defined by
\[
	\norm{u}_{\ell^p(\mathbb Z^N)}
	:=
	\begin{cases}
		\displaystyle
		\left(
		\sum_{x\in\mathbb Z^N}\abs{u(x)}^p
		\right)^{1/p},
		& 1\leq p<\infty,\\[3mm]
		\displaystyle
		\sup_{x\in\mathbb Z^N}\abs{u(x)},
		& p=\infty.
	\end{cases}
\]
The \(\ell^p(\mathbb Z^N)\) space is defined as
\[
	\ell^p(\mathbb Z^N)
	:=
	\left\{
	u\in C(\mathbb Z^N):
	\norm{u}_{\ell^p(\mathbb Z^N)}<\infty
	\right\}.
\]
When there is no confusion, we write
\(\norm{u}_p:=\norm{u}_{\ell^p(\mathbb Z^N)}\).
For \(u,v\in C(\mathbb Z^N)\), define the gradient form by
\[
	\Gamma(u,v)(x)
	:=
	\frac12\sum_{y\sim x}
	\bigl(u(y)-u(x)\bigr)\bigl(v(y)-v(x)\bigr).
\]
Let \(\Gamma(u):=\Gamma(u,u)\) and
\[
	|\nabla u|(x)
	:=
	\sqrt{\Gamma(u)(x)}
	=
	\left(
	\frac12\sum_{y\sim x}|u(y)-u(x)|^2
	\right)^{1/2}.
\]
For an adjacent ordered pair \((x,y)\), we recall the difference
notation
\[
	\nabla_{xy}u:=u(y)-u(x).
\]
For \(u\in C(\mathbb Z^N)\), define the edge gradient
\(\nabla_Eu:E\to\mathbb R\) by
\[
	(\nabla_Eu)(x,y)
	:=
	\nabla_{xy}u,
	\qquad
	(x,y)\in E.
\]
For \(1\leq p<\infty\), the \(D^{1,p}\)-norm of \(u\) is given by
\[
	\norm{u}_{D^{1,p}(\mathbb Z^N)}
	:=
	\left(
	\frac12\sum_{x\in\mathbb Z^N}
	\sum_{y\sim x}
	\abs{\nabla_{xy}u}^p
	\right)^{1/p}.
\]
In particular,
\[
	\norm{u}_{D^{1,2}(\mathbb Z^N)}
	=
	\norm{|\nabla u|}_{\ell^2(\mathbb Z^N)}.
\]
The edgewise Lipschitz seminorm is given by
\[
	\norm{\nabla_Eu}_{\ell^\infty(E)}
	:=
	\sup_{(x,y)\in E}
	\abs{\nabla_{xy}u}.
\]
\(D^{1,p}(\mathbb Z^N)\) is the completion of
\(C_c(\mathbb Z^N)\) in the \(D^{1,p}\)-norm.

For \(1\leq p<N\), set
\[
	p^*:=\frac{Np}{N-p}.
\]
By the discrete Sobolev inequality on \(\mathbb Z^N\)
(see, for instance, \cite{HuaLi2021}), for every \(q\geq p^*\),
there exists a constant \(\overline C_{N,p,q}>0\) such that
\begin{equation}\label{eq:discrete-sobolev-inequality}
	\lVert u\rVert_{\ell^q(\mathbb Z^N)}
	\leq
	\overline C_{N,p,q}
	\lVert u\rVert_{D^{1,p}(\mathbb Z^N)}
	\qquad
	\text{for all }u\in D^{1,p}(\mathbb Z^N).
\end{equation}

We also record the elementary formula for the constant \(\kappa_p\)
defined in the Introduction:
\[
\kappa_p
=
\begin{cases}
\displaystyle \frac12,
& p=2,\\[2mm]
\displaystyle
\frac{p-1}{p}
\left(
\frac{(p-1)^2}{p(p-2)}
\right)^{\frac{p-2}{2}},
& p>2.
\end{cases}
\]
In particular, \(\kappa_p\to\frac12\) as \(p\to2^+\).

We shall repeatedly use the following standard cutoff characterization of
\(D^{1,p}(\mathbb Z^N)\). It is included to make explicit a point needed in
the rearrangement and interpolation arguments below.

\begin{lemma}\label{lem:finite-energy-characterization}
Let \(1\leq p<N\) and \(p^*=Np/(N-p)\). Suppose that
\(u:\mathbb Z^N\to\mathbb R\) satisfies
\[
 u\in\ell^{p^*}(\mathbb Z^N)
 \qquad\text{and}\qquad
 \frac12\sum_{x\in\mathbb Z^N}\sum_{y\sim x}
 |\nabla_{xy}u|^p<\infty.
\]
Then \(u\in D^{1,p}(\mathbb Z^N)\).
\end{lemma}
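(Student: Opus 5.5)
The plan is to approximate \(u\) by finitely supported functions \(u_R := \eta_R u\), where \(\eta_R\) is a cutoff, and to show that \(\|u_R-u\|_{D^{1,p}}\to0\). Since \(D^{1,p}(\mathbb Z^N)\) is the completion of \(C_c(\mathbb Z^N)\), this gives the claim.

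Fix a Lipschitz profile \(\phi:[0,\infty)\to[0,1]\) with \(\phi=1\) on \([0,1]\), \(\phi=0\) on \([2,\infty)\), and \(|\phi'|\le1\). Set \(\eta_R(x)=\phi(|x|/R)\), where \(|\cdot|\) is the Euclidean norm. Then \(\eta_R\) is supported in the ball of radius \(2R\), equals \(1\) on the ball of radius \(R\), and satisfies \(|\nabla_{xy}\eta_R|\le 1/R\) for \(x\sim y\). Moreover, \(\nabla_{xy}\eta_R\) vanishes unless \(x\) or \(y\) lies in the annulus \(A_R:=\{R-1\le |x|\le 2R+1\}\). Each \(u_R\) is finitely supported.

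For an edge \((x,y)\), I would use the product rule
\[
\nabla_{xy}(u-u_R)=(1-\eta_R(y))\nabla_{xy}u-u(x)\nabla_{xy}\eta_R .
\]
This gives
\[
\|u-u_R\|_{D^{1,p}}^p\le 2^{p-1}\Bigl(\tfrac12\sum_{x}\sum_{y\sim x}|1-\eta_R(y)|^p|\nabla_{xy}u|^p+\tfrac12\sum_x\sum_{y\sim x}|u(x)|^p|\nabla_{xy}\eta_R|^p\Bigr).
\]
The first term tends to \(0\) by dominated convergence: the summand is bounded by \(|\nabla_{xy}u|^p\), which is summable, and it vanishes pointwise as \(R\to\infty\). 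The second term is at most \(2N R^{-p}\sum_{x\in A_R'}|u(x)|^p\), where \(A_R'\) is a slightly enlarged annulus with \(\#A_R'\le CR^N\).

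The main step is controlling this second term, using Hölder's inequality with exponents \(p^*/p\) and \(N/p\). Since \(1-p/p^*=p/N\),
\[
R^{-p}\sum_{x\in A_R'}|u(x)|^p\le R^{-p}\,(\#A_R')^{p/N}\Bigl(\sum_{x\in A_R'}|u(x)|^{p^*}\Bigr)^{p/p^*}\le C\Bigl(\sum_{|x|\ge R-2}|u(x)|^{p^*}\Bigr)^{p/p^*}.
\]
The right-hand side tends to \(0\) because \(u\in\ell^{p^*}(\mathbb Z^N)\) and the tail of a convergent series vanishes. It is exactly here that the hypothesis \(u\in\ell^{p^*}\) enters, in place of the unavailable assumption \(u\in\ell^p\). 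The matching of the powers \(R^{-p}\) and \(R^{N\cdot p/N}\) is the key point.

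Hence \(u_R\to u\) in the \(D^{1,p}\)-seminorm. In particular \(\{u_R\}\) is Cauchy in \(D^{1,p}\), so it converges to some element of the completion. To identify that limit with \(u\), I would use the discrete Sobolev inequality \eqref{eq:discrete-sobolev-inequality}. It shows that the limit element is realized as the \(\ell^{p^*}\)-limit of \(u_R\). Since \(u_R\to u\) pointwise, and indeed in \(\ell^{p^*}\) by dominated convergence, that \(\ell^{p^*}\)-limit is \(u\). Therefore \(u\in D^{1,p}(\mathbb Z^N)\).
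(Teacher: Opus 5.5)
Your proposal is correct and follows essentially the same argument as the paper: you truncate with $u_R=\eta_R u$, split $\nabla_{xy}(u-u_R)$ by the same product rule, send the first term to zero via the tail of the edge $p$-energy, and control the second by H\"older on an annulus of cardinality $O(R^N)$ using $1-p/p^*=p/N$. The differences are only cosmetic: you use Euclidean balls instead of cubes, and you explicitly identify the limit in the completion via the Sobolev embedding, which the paper leaves implicit.
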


\begin{proof}
Choose cutoff functions \(\eta_R:\mathbb Z^N\to[0,1]\) such that
\(\eta_R=1\) on \([-R,R]^N\cap\mathbb Z^N\), \(\eta_R=0\) outside
\([-2R,2R]^N\cap\mathbb Z^N\), and
\[
 |\eta_R(x)-\eta_R(y)|\leq \frac{C}{R}
 \qquad\text{whenever }x\sim y.
\]
Set \(u_R:=\eta_Ru\in C_c(\mathbb Z^N)\). For an edge \(x\sim y\),
\[
 \nabla_{xy}(u-u_R)
 =(1-\eta_R(y))\nabla_{xy}u
 +u(x)(\eta_R(x)-\eta_R(y)).
\]
Hence, with \(A_R:=([-2R-1,2R+1]^N\setminus[-R+1,R-1]^N)
\cap\mathbb Z^N\),
\[
 \|u-u_R\|_{D^{1,p}}^p
 \leq C\!\sum_{\substack{x\sim y\\ \max\{|x|_\infty,|y|_\infty\}\geq R-1}}
 |\nabla_{xy}u|^p
 +\frac{C}{R^p}\sum_{x\in A_R}|u(x)|^p.
\]
The first term tends to zero by summability of the edge \(p\)-energy. For
the second term, H\"older's inequality and \(|A_R|\leq CR^N\) yield
\[
 \frac1{R^p}\sum_{x\in A_R}|u(x)|^p
 \leq C
 \left(\sum_{x\in A_R}|u(x)|^{p^*}\right)^{p/p^*}
 \longrightarrow0,
\]
because \(1-p/p^*=p/N\) and \(u\in\ell^{p^*}(\mathbb Z^N)\).
Therefore \(u_R\to u\) in the \(D^{1,p}\)-norm, proving the claim.
\end{proof}

\subsection{The admissible set and the energy functional}

We define
\[X
:=
\left\{
u\in D^{1,2}(\mathbb Z^N):
\abs{\nabla_{xy}u}\leq1
\ \text{for every }x\sim y
\right\}.\]

We first record two elementary properties of the admissible set \(X\).

\begin{lemma}
	Fix \(\alpha>2\). The set \(X\) has the following properties:
	\begin{enumerate}
		\item[\textup{(i)}]
		For every \(t>0\), the set
		\(\{u\in X:\norm{u}_{\alpha}=t\}\) is not empty.

		\item[\textup{(ii)}]
		The set \(X\) is weakly closed in
		\(D^{1,2}(\mathbb Z^N)\).
	\end{enumerate}
\end{lemma}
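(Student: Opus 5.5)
For (i), I will take a single nonzero finitely supported function and scale it. Fix \(\varphi\in C_c(\mathbb Z^N)\) with \(\varphi\not\equiv0\), for instance \(\varphi=\mathbf 1_{\{0\}}\).

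For \(\lambda>0\), the function \(\lambda\varphi\) is finitely supported, so it lies in \(D^{1,2}(\mathbb Z^N)\) and in \(\ell^\alpha(\mathbb Z^N)\). Moreover, \(\|\lambda\varphi\|_\alpha=\lambda\|\varphi\|_\alpha\), so choosing \(\lambda=t/\|\varphi\|_\alpha\) gives norm exactly \(t\).

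The difficulty is the gradient constraint: \(|\nabla_{xy}(\lambda\varphi)|=\lambda|\nabla_{xy}\varphi|\) grows with \(\lambda\). So I cannot fix \(\varphi\) independently of \(t\). Instead I will use a slowly varying profile whose mass is tunable.

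Take the tent function \(\varphi_R(x):=(R-|x|_1)_+\) for \(R\in\mathbb N\). It has finite support, and \(|\nabla_{xy}\varphi_R|\leq1\) on every edge, since \(|x|_1\) changes by exactly one along an edge and \((\cdot)_+\) is \(1\)-Lipschitz. Hence \(\varphi_R\in X\). For \(s\in(0,1]\), the function \(s\varphi_R\) is also in \(X\).

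The map \(s\mapsto\|s\varphi_R\|_\alpha=s\|\varphi_R\|_\alpha\) is continuous and covers \((0,\|\varphi_R\|_\alpha]\). Since \(\varphi_R\geq R/2\) on the ball \(\{|x|_1\leq R/2\}\), we have \(\|\varphi_R\|_\alpha\to\infty\) as \(R\to\infty\). So for any \(t>0\), I pick \(R\) with \(\|\varphi_R\|_\alpha\geq t\) and set \(s=t/\|\varphi_R\|_\alpha\). This is the only point needing care, and it is elementary.

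For (ii), I note that \(X\) is convex. The constraint \(|\nabla_{xy}u|\leq1\) is convex in \(u\) edge by edge, and \(D^{1,2}\) is a linear space. By Mazur's lemma it therefore suffices to show that \(X\) is strongly closed.

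Let \(u_n\in X\) with \(u_n\to u\) in \(D^{1,2}(\mathbb Z^N)\). For each fixed edge \((x,y)\),
\[
|\nabla_{xy}u_n-\nabla_{xy}u|\leq \sqrt2\,\|u_n-u\|_{D^{1,2}},
\]
because a single term is bounded by the full sum. Letting \(n\to\infty\) gives \(|\nabla_{xy}u|\leq1\), so \(u\in X\).

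Alternatively, one can argue directly. For fixed \((x,y)\), the map \(u\mapsto\nabla_{xy}u\) is a continuous linear functional on \(D^{1,2}\). Hence weak convergence \(u_n\rightharpoonup u\) gives \(\nabla_{xy}u_n\to\nabla_{xy}u\) for each edge. The pointwise bound then passes to the limit, and Mazur's lemma is not even needed.

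I expect neither step to be a real obstacle. The only subtlety is in (i): the admissible amplitude is limited by the gradient bound, so large \(\ell^\alpha\)-norms must be reached by spreading out the support rather than by scaling a fixed function up.
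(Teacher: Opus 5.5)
Your proof is correct. It differs from the paper only in the choice of test function for (i), and in offering an optional Mazur detour for (ii).

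For (i), the paper takes the flat plateau \(u_n=tn^{-N/\alpha}\mathbf 1_{Q_n}\) on a cube of side \(n\). Its amplitude already gives \(\|u_n\|_\alpha=t\) exactly, and its only nonzero edge differences are the boundary jumps of size \(tn^{-N/\alpha}\), which drop below \(1\) for large \(n\). Your tent \(\varphi_R=(R-|x|_1)_+\) has edge differences at most \(1\) for every \(R\), because \(|x|_1\) changes by exactly one along each edge. You then normalize by a factor \(s\le1\). Both constructions use the same mechanism: large \(\ell^\alpha\)-norms are reached by spreading the support, not by raising the amplitude. The paper's version is slightly shorter since no separate choice of \(R\) and \(s\) is needed; yours is equally elementary.

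For (ii), your direct argument is the paper's argument with one step removed. The paper passes through the embedding \(D^{1,2}(\mathbb Z^N)\hookrightarrow\ell^{2^*}(\mathbb Z^N)\) to get \(u_n(x)\to u(x)\) at each vertex, and then takes differences. You instead use that \(u\mapsto\nabla_{xy}u\) is itself a bounded linear functional on \(D^{1,2}\). In fact \(|\nabla_{xy}u|\le\|u\|_{D^{1,2}}\), since each unoriented edge appears twice in the double sum, so your factor \(\sqrt2\) is harmless but unnecessary; the paper records this bound in the proof of Theorem~\ref{thm:main}. The Mazur route via convexity of \(X\) is also valid, but as you say it is not needed.
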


\begin{proof}
	First, we show that
	\(\{u\in X:\norm{u}_{\alpha}=t\}\) is not empty for every \(t>0\).
	For \(n\in\mathbb N\), let
	\[
		Q_n
		:=
		\left[-\frac n2,\frac n2\right)^N
		\cap\mathbb Z^N
	\]
	and define
	\[
		u_n
		:=
		t n^{-\frac{N}{\alpha}}
		\mathbf 1_{Q_n}.
	\]
	Since \(\abs{Q_n}=n^N\), we have
	\[
		\norm{u_n}_{\alpha}^{\alpha}
		=
		\sum_{x\in\mathbb Z^N}\abs{u_n(x)}^\alpha
		=
		n^N t^\alpha n^{-N}
		=
		t^\alpha,
	\]
	and hence \(\norm{u_n}_{\alpha}=t\). Moreover,
	\(u_n\in C_c(\mathbb Z^N)\) and
	\(\abs{\nabla_{xy}u_n}\leq tn^{-N/\alpha}\) for every \(x\sim y\).
	For \(n\) sufficiently large, \(tn^{-N/\alpha}\leq1\), so that
	\(u_n\in X\). Hence
	\(\{u\in X:\norm{u}_{\alpha}=t\}\) is not empty.

	Next, we show that \(X\) is weakly closed in
	\(D^{1,2}(\mathbb Z^N)\). Let \(\{u_n\}\subset X\) and suppose that
	\[
		u_n\rightharpoonup u
		\qquad
		\text{in }D^{1,2}(\mathbb Z^N).
	\]
	Since the embedding
	\[
		D^{1,2}(\mathbb Z^N)
		\hookrightarrow
		\ell^{2^*}(\mathbb Z^N)
	\]
	is continuous, we have
	\[
		u_n\rightharpoonup u
		\qquad
		\text{in }\ell^{2^*}(\mathbb Z^N).
	\]
	For each fixed \(x\in\mathbb Z^N\), the coordinate map
	\(v\mapsto v(x)\) is a continuous linear functional on
	\(\ell^{2^*}(\mathbb Z^N)\). Hence
	\[
		u_n(x)\longrightarrow u(x)
		\qquad\text{for every }x\in\mathbb Z^N.
	\]
	It follows that, for every \(x\sim y\),
	\[
		\nabla_{xy}u_n
		\longrightarrow
		\nabla_{xy}u.
	\]
	Since \(u_n\in X\), we have
	\[
		\abs{\nabla_{xy}u_n}\leq1
		\qquad
		\text{for every }x\sim y.
	\]
	Passing to the limit, we obtain
	\[
		\abs{\nabla_{xy}u}\leq1
		\qquad
		\text{for every }x\sim y.
	\]
	Hence \(u\in X\), and therefore \(X\) is weakly closed in
	\(D^{1,2}(\mathbb Z^N)\).
\end{proof}

We define
\[	J(u)
:=
\frac12\sum_{x\in\mathbb Z^N}\sum_{y\sim x}
\left(
1-\sqrt{1-\abs{\nabla_{xy}u}^2}
\right),
\qquad
u\in X.\]

We next record the coercivity and weak lower semicontinuity of \(J\).

\begin{lemma}
	The functional \(J\) has the following properties:
	\begin{enumerate}
		\item[\textup{(i)}]
		\(J\) is coercive on \(X\).

		\item[\textup{(ii)}]
		\(J\) is weakly lower semicontinuous on \(X\).
	\end{enumerate}
\end{lemma}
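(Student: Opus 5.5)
The plan rests on the elementary two-sided bound
\[
\frac{s}{2}\;\leq\;1-\sqrt{1-s}\;\leq\; s,\qquad s\in[0,1].
\]
The left inequality is the case \(p=2\) of \eqref{eq:kappa-lower-bound}, since \(\kappa_2=\frac12\). The right inequality holds because \(\sqrt{1-s}\geq 1-s\) on \([0,1]\). Applying it with \(s=|\nabla_{xy}u|^2\) and summing over edges gives, for every \(u\in X\),
\[
\frac12\|u\|_{D^{1,2}(\mathbb Z^N)}^2\leq J(u)\leq \|u\|_{D^{1,2}(\mathbb Z^N)}^2.
\]
In particular \(J\) is finite on \(X\) and comparable to the squared \(D^{1,2}\)-norm.

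For (i), coercivity is immediate from the lower bound. If \(u_n\in X\) and \(\|u_n\|_{D^{1,2}}\to\infty\), then \(J(u_n)\geq\frac12\|u_n\|_{D^{1,2}}^2\to\infty\). Equivalently, sublevel sets of \(J\) in \(X\) are bounded in \(D^{1,2}(\mathbb Z^N)\).

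For (ii), take \(u_n\rightharpoonup u\) in \(D^{1,2}(\mathbb Z^N)\) with \(u_n\in X\). By the weak closedness of \(X\) established in the previous lemma, \(u\in X\). The same argument as there shows that weak convergence in \(D^{1,2}\) implies weak convergence in \(\ell^{2^*}\), and hence pointwise convergence \(u_n(x)\to u(x)\) for every \(x\). Consequently \(\nabla_{xy}u_n\to\nabla_{xy}u\) on every edge.

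The function \(g(r):=1-\sqrt{1-r^2}\) is continuous and nonnegative on \([-1,1]\). Hence the edge terms \(g(\nabla_{xy}u_n)\) are nonnegative and converge edgewise to \(g(\nabla_{xy}u)\). Fatou's lemma for sums over the countable edge set then yields
\[
J(u)=\frac12\sum_{x}\sum_{y\sim x}g(\nabla_{xy}u)\leq\liminf_{n\to\infty}J(u_n).
\]
If weak lower semicontinuity is meant along sequences in \(X\) with bounded energy, this is exactly the statement. Since weakly convergent sequences are bounded, no extra hypothesis is needed.

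There is no genuine obstacle. The only point requiring care is the passage from weak \(D^{1,2}\) convergence to edgewise pointwise convergence, and this was already carried out in the proof of weak closedness of \(X\). Convexity of \(J\) is not needed: Fatou's lemma together with the nonnegativity of the integrand suffices.
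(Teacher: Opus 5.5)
Your proof is correct and follows essentially the same route as the paper. You derive coercivity from the two-sided bound \(\frac12\|u\|_{D^{1,2}(\mathbb Z^N)}^2\le J(u)\le\|u\|_{D^{1,2}(\mathbb Z^N)}^2\), and weak lower semicontinuity from weak closedness of \(X\), edgewise pointwise convergence via the continuous embedding into \(\ell^{2^*}(\mathbb Z^N)\), and Fatou's lemma; the aside about ``bounded energy'' is unnecessary but harmless.
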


\begin{proof}
A direct calculation gives
	\[
		\frac12s^2
		\leq
		1-\sqrt{1-s^2}
		\leq
		s^2
		\qquad
		\text{for every }s\in[0,1].
	\]
	Hence
	\[
		\frac12
		\norm{u}_{D^{1,2}(\mathbb Z^N)}^2
		\leq
		J(u)
		\leq
		\norm{u}_{D^{1,2}(\mathbb Z^N)}^2
		\qquad
		\text{for every }u\in X,
	\]
	showing \textup{(i)}.

To prove \textup{(ii)}, let \(\{u_n\}\subset X\) and suppose
	that
	\[
		u_n\rightharpoonup u
		\qquad
		\text{in }D^{1,2}(\mathbb Z^N).
	\]
	Since \(X\) is weakly closed in \(D^{1,2}(\mathbb Z^N)\), we have
	\(u\in X\). By the continuity of the embedding
	\[
		D^{1,2}(\mathbb Z^N)
		\hookrightarrow
		\ell^{2^*}(\mathbb Z^N),
	\]
	we have
	\[
		u_n\rightharpoonup u
		\qquad
		\text{in }\ell^{2^*}(\mathbb Z^N).
	\]
	As above, the coordinate maps are continuous on
	\(\ell^{2^*}(\mathbb Z^N)\), and hence
	\[
		u_n(x)\longrightarrow u(x)
		\qquad\text{for every }x\in\mathbb Z^N.
	\]
	It follows that
	\[
		\nabla_{xy}u_n
		\longrightarrow
		\nabla_{xy}u
		\qquad
		\text{for every }x\sim y.
	\]
	By Fatou's lemma, we obtain
\[
		\begin{aligned}
			J(u)
			&=
			\frac12\sum_{x\in\mathbb Z^N}\sum_{y\sim x}
			\left(
			1-\sqrt{1-\abs{\nabla_{xy}u}^2}
			\right)
			\\
			&\leq
			\liminf_{n\to\infty}
			\frac12\sum_{x\in\mathbb Z^N}\sum_{y\sim x}
			\left(
			1-\sqrt{1-\abs{\nabla_{xy}u_n}^2}
			\right)
			\\
			&=
			\liminf_{n\to\infty}J(u_n).
		\end{aligned}
	\]
\end{proof}

\subsection{Discrete Schwarz rearrangement}

The discrete Schwarz rearrangement developed in
\cite{Hajaiej2026} is crucial for the proof of our main results.
We therefore briefly recall its definition and some related properties.

Let
\[
	C_0^+(\mathbb Z^N)
	:=
	\left\{
	u\in C(\mathbb Z^N):
	u\geq0,\
	\abs{
	\left\{
	x\in\mathbb Z^N:u(x)>t
	\right\}
	}
	<\infty
	\ \text{for every }t>0
	\right\}
\]
be the set of functions that are admissible for discrete Schwarz
rearrangement. For \(u\in C_0^+(\mathbb Z^N)\), its discrete Schwarz
rearrangement is denoted by \(u^*:=R_{\mathbb Z^N}u\). Since
\(D^{1,2}(\mathbb Z^N)\hookrightarrow\ell^{2^*}(\mathbb Z^N)\), every
\(u\in D^{1,2}(\mathbb Z^N)\) satisfies
\(\abs{\{x:\abs{u(x)}>t\}}<\infty\) for every \(t>0\). Thus, for a
real-valued \(u\in D^{1,2}(\mathbb Z^N)\), we may write
\(u^*:=R_{\mathbb Z^N}\abs{u}\).
We define
\[
	\Sigma
	:=
	\left\{
	u\in D^{1,2}(\mathbb Z^N):u\geq0,\ u=u^*
	\right\}.
\]
For more details on the definition and properties of the discrete Schwarz
rearrangement, we refer to \cite{Hajaiej2026}.
We recall that
\begin{equation}\label{eq:rearrangement-norm-preserving}
	\norm{u^*}_q
	=
	\norm{u}_q,
	\qquad
	1\leq q\leq\infty.
\end{equation}

\begin{lemma}\label{lem:compact-symmetric-embedding}
	For every \(\alpha>2^*\), the embedding
	\(\Sigma\hookrightarrow\ell^\alpha(\mathbb Z^N)\) is compact.
\end{lemma}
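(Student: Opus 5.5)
Here ``compact'' means: every sequence bounded in \(D^{1,2}(\mathbb Z^N)\) and contained in \(\Sigma\) has a subsequence converging strongly in \(\ell^\alpha(\mathbb Z^N)\). The plan is to combine pointwise convergence with a uniform decay estimate for Schwarz symmetric functions, and to interpolate between \(\ell^{2^*}\) and \(\ell^\infty\).

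Let \(\{u_n\}\subset\Sigma\) with \(\norm{u_n}_{D^{1,2}}\leq M\). By reflexivity of \(D^{1,2}(\mathbb Z^N)\) we may pass to a subsequence with \(u_n\rightharpoonup u\) weakly in \(D^{1,2}(\mathbb Z^N)\). As in the proof of the weak closedness of \(X\), the coordinate maps are continuous on \(\ell^{2^*}(\mathbb Z^N)\), so \(u_n(x)\to u(x)\) for every \(x\in\mathbb Z^N\). Moreover, \(\norm{u_n}_{2^*}\leq \overline C_{N,2,2^*}M=:K\) uniformly by \eqref{eq:discrete-sobolev-inequality}.

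\textbf{Uniform decay.} We use the defining structure of the discrete Schwarz rearrangement from \cite{Hajaiej2026}: \(u^*\) is obtained by placing the values of \(|u|\) in nonincreasing order along a fixed enumeration \(x_1,x_2,\ldots\) of \(\mathbb Z^N\). This enumeration is compatible with a distance ordering, so that the number of points preceding \(x\) is comparable to \(|x|^N\) (up to constants), with \(|x|\) a norm on \(\mathbb R^N\). Hence for \(v\in\Sigma\) and every \(k\),
\[
k\,v(x_k)^{2^*}\leq \sum_{j\leq k}v(x_j)^{2^*}\leq \norm{v}_{2^*}^{2^*},
\]
which gives \(v(x_k)\leq \norm{v}_{2^*}k^{-1/2^*}\). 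Consequently, for \(R\) large,
\[
\sup_n\ \sup_{|x|\geq R}u_n(x)\leq CK R^{-N/2^*}=:\delta_R\to0 .
\]
The same bound holds for \(u\), by pointwise convergence (and \(\norm{u}_{2^*}\leq K\)).

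\textbf{Splitting the norm.} Write \(B_R:=\{x:|x|<R\}\), a finite set. On \(B_R\), pointwise convergence gives \(\sum_{B_R}|u_n-u|^\alpha\to0\). Off \(B_R\), we use \(\alpha>2^*\) and \(|u_n-u|\leq 2\delta_R\) there to get
\[
\sum_{|x|\geq R}|u_n-u|^\alpha\leq (2\delta_R)^{\alpha-2^*}\norm{u_n-u}_{2^*}^{2^*}\leq (2\delta_R)^{\alpha-2^*}(2K)^{2^*}.
\]
Letting first \(n\to\infty\) and then \(R\to\infty\) yields \(u_n\to u\) in \(\ell^\alpha(\mathbb Z^N)\). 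This step uses only \(\alpha>2^*\), which is exactly why the critical exponent is excluded.

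\textbf{Main obstacle.} The only delicate point is the decay step: it requires that the ordering used by the discrete Schwarz rearrangement fills balls, in the sense that \(\#\{j:j\le k\}\) points occupy a region of radius \(\sim k^{1/N}\). I would verify this from the construction in \cite{Hajaiej2026}. If that construction only gives monotonicity along coordinate directions, I would instead use the fact that \(v\) symmetric decreasing in each coordinate implies \(v\geq v(x)\) on a box of volume \(\sim\prod_i|x^{(i)}|\gtrsim |x|_\infty\). That bound alone is too weak, so I would use the full ball-type level set structure: the superlevel sets of \(u^*\) are the initial segments of the enumeration. With that structure the argument above is routine.
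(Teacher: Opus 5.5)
Your proof is correct, but it takes a different route from the paper's. The paper does not prove the compactness itself. It observes that a $D^{1,2}$-bounded sequence in $\Sigma$ is bounded in $\ell^{2^*}(\mathbb Z^N)$ and then quotes the Strauss-type compactness theorem \cite[Theorem~4.16]{Hajaiej2026}.

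You instead reprove that theorem in the case needed, in three steps: pointwise convergence from the weak $D^{1,2}$ limit, a uniform decay bound for functions that are nonincreasing along the enumeration defining the rearrangement, and $\ell^{2^*}$--$\ell^\infty$ interpolation on the tail. This buys two things:
\begin{itemize}
\item The lemma becomes self-contained. Its only input from \cite{Hajaiej2026} is the structural fact that superlevel sets of elements of $\Sigma$ are initial segments of a fixed nested exhaustion of $\mathbb Z^N$.
\item It shows explicitly that the $\ell^\alpha$ limit is the weak $D^{1,2}$ limit. This is exactly how the lemma is used in Proposition~\ref{prop:constrained-minimizer} and Lemma~\ref{lem:mt-lower-semicontinuity}.
\end{itemize}
The paper's citation is shorter. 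It also inherits whatever precise form of the rearrangement \cite{Hajaiej2026} uses, without anyone having to inspect its construction.

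The ``main obstacle'' you identify is not actually needed. For any fixed enumeration, set $F_k:=\{x_1,\dots,x_k\}$. Your own estimate gives $v(x)\le\|v\|_{2^*}k^{-1/2^*}$ for every $v\in\Sigma$ and every $x\notin F_k$. Splitting the $\ell^\alpha$ sum over the finite set $F_k$ and its complement then gives the conclusion, with no ball-filling property and no decay rate.

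Only uniform decay at infinity matters here. On $\mathbb Z^N$ the bounded region is a finite set and no concentration can occur. For the same reason, the coordinatewise box bound you dismissed as ``too weak'' would already have sufficed, since it gives $v(x)\lesssim\|v\|_{2^*}\,|x|_\infty^{-1/2^*}\to0$ uniformly.
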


\begin{proof}
	Let \(\{u_n\}\subset\Sigma\) be bounded in
	\(D^{1,2}(\mathbb Z^N)\). Since the embedding
	\[
		D^{1,2}(\mathbb Z^N)
		\hookrightarrow
		\ell^{2^*}(\mathbb Z^N)
	\]
	is continuous, \(\{u_n\}\) is bounded in
	\(\ell^{2^*}(\mathbb Z^N)\). Hence
	\cite[Theorem~4.16]{Hajaiej2026} yields, up to a subsequence,
	\[
		u_n\longrightarrow u
		\qquad
		\text{in }\ell^\alpha(\mathbb Z^N)
	\]
	for every \(\alpha>2^*\).
\end{proof}

\section{Proofs of Theorems~\ref{thm:subcritical} and
\ref{thm:main}}\label{sec:critical-subcritical}

In this section, we prove Theorems~\ref{thm:subcritical}
and~\ref{thm:main}. We first establish a lower bound for
\(C_{N,\alpha}\), which yields positivity in the Sobolev critical and
supercritical ranges. We then treat the Sobolev critical case, identify
\(C_{N,2^*}\), and prove its nonattainment.

\begin{lemma}\label{lem:estimate}
Let \(N\geq3\) and \(\alpha\geq2^*\). Then
\[
C_{N,\alpha}
\geq \kappa_{p_\alpha}
	\mathcal S_{p_\alpha}
>0.
\]
\end{lemma}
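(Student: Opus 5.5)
The plan is to compare $J$ pointwise on each edge with the $p_\alpha$-energy, and then apply the discrete Sobolev inequality with exponent $p_\alpha$. Two identities make this work. Since $\alpha\geq 2^*$, we have $p_\alpha=N\alpha/(N+\alpha)\geq 2$. Also $p_\alpha<N$, because $N\alpha<N^2+N\alpha$. Moreover $p_\alpha^*=\alpha$, and $N/(N+\alpha)=p_\alpha/\alpha$, so the right-hand side of \eqref{eq:sobolev-type-inequality} is exactly $C_{N,\alpha}\|u\|_\alpha^{p_\alpha}$.

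Fix $u\in X\setminus\{0\}$; this suffices, since for $\alpha\ge2^*$ we have $X\subset\ell^\alpha$. Every edge satisfies $|\nabla_{xy}u|^2\in[0,1]$, so \eqref{eq:kappa-lower-bound} with $p=p_\alpha$ and $s=|\nabla_{xy}u|^2$ gives
\[
1-\sqrt{1-|\nabla_{xy}u|^2}\geq \kappa_{p_\alpha}|\nabla_{xy}u|^{p_\alpha}.
\]
Summing over all ordered adjacent pairs yields
\[
J(u)\geq \kappa_{p_\alpha}\|u\|_{D^{1,p_\alpha}}^{p_\alpha}.
\]
In particular the $p_\alpha$-energy of $u$ is finite, because $J(u)\le\|u\|_{D^{1,2}}^2<\infty$.

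The one point needing care is to justify $u\in D^{1,p_\alpha}(\mathbb Z^N)$, so that the definition of $\mathcal S_{p_\alpha}$ applies. By the discrete Sobolev inequality \eqref{eq:discrete-sobolev-inequality} with $p=2$ and $q=\alpha\ge 2^*$, we get $u\in\ell^\alpha=\ell^{p_\alpha^*}$. Combined with the finite $p_\alpha$-energy, Lemma~\ref{lem:finite-energy-characterization} then gives $u\in D^{1,p_\alpha}(\mathbb Z^N)$. The definition of $\mathcal S_{p_\alpha}$ now gives
\[
\|u\|_{D^{1,p_\alpha}}^{p_\alpha}\geq \mathcal S_{p_\alpha}\|u\|_{\alpha}^{p_\alpha}.
\]
Combining this with the previous bound and dividing by $\|u\|_\alpha^{p_\alpha}=(\sum|u|^\alpha)^{N/(N+\alpha)}$, then taking the infimum over $u$, gives $C_{N,\alpha}\ge\kappa_{p_\alpha}\mathcal S_{p_\alpha}$.

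Positivity comes from two facts. First, $\kappa_{p_\alpha}\ge\frac12>0$, as noted in the Introduction. Second, $\mathcal S_{p_\alpha}>0$ by the discrete Sobolev inequality: for $2\le p<N$ and $q=p^*$ it gives $\mathcal S_p\ge \overline C_{N,p,p^*}^{-p}$. No step is a genuine obstacle; the only subtlety is the membership $u\in D^{1,p_\alpha}$, which Lemma~\ref{lem:finite-energy-characterization} handles.
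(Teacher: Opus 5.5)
Your proposal is correct and follows essentially the same route as the paper: the edgewise bound via $\kappa_{p_\alpha}$, membership $u\in D^{1,p_\alpha}(\mathbb Z^N)$ via Lemma~\ref{lem:finite-energy-characterization} using $u\in\ell^{\alpha}=\ell^{p_\alpha^*}$, and then the definition of $\mathcal S_{p_\alpha}$. The only cosmetic difference is that you get finiteness of the $p_\alpha$-energy from $J(u)\le\|u\|_{D^{1,2}}^2$, whereas the paper uses $|\nabla_{xy}u|^{p_\alpha}\le|\nabla_{xy}u|^2$. You also explicitly check $p_\alpha<N$, which the paper leaves implicit.
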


\begin{proof}
For every \(u\in X\setminus\{0\}\), the discrete Sobolev inequality gives
\(u\in\ell^\alpha(\mathbb Z^N)\). Moreover, since
\(p_\alpha\geq2\) and \(|\nabla_{xy}u|\leq1\), we obtain
\[
\sum_{x\in\mathbb Z^N}\sum_{y\sim x}
|\nabla_{xy}u|^{p_\alpha}
\leq
\sum_{x\in\mathbb Z^N}\sum_{y\sim x}
|\nabla_{xy}u|^2
<\infty.
\]
As \(p_\alpha^*=\alpha\),
Lemma~\ref{lem:finite-energy-characterization} gives
\(u\in D^{1,p_\alpha}(\mathbb Z^N)\).

Then, by \eqref{eq:kappa-lower-bound} and the definition of
\(\mathcal S_{p_\alpha}\),
\[
\begin{aligned}
	J(u)
	&\geq
	\kappa_{p_\alpha}
	\frac12\sum_{x\in\mathbb Z^N}\sum_{y\sim x}
	\abs{\nabla_{xy}u}^{p_\alpha}
	\\
	&\geq
	\kappa_{p_\alpha}
	\mathcal S_{p_\alpha}
	\norm{u}_{\ell^\alpha(\mathbb Z^N)}
	^{p_\alpha}.
\end{aligned}
\]
	
Therefore,
\[
C_{N,\alpha} 
\geq 
\kappa_{p_\alpha}
\mathcal S_{p_\alpha}
>0. 
\]
\end{proof}

We now prove Theorem~\ref{thm:subcritical}.

\begin{proof}
By Lemma~\ref{lem:estimate}, if
\(\alpha\geq2^*\), then
\[
C_{N,\alpha} 
\geq 
\kappa_{p_\alpha}
	\mathcal S_{p_\alpha}
>0. 
\]
On the other hand, taking \(u=\mathbf 1_{\{0\}}\), we have
\(u\in X\) and \(\|u\|_{\ell^\alpha(\mathbb Z^N)}=1\). Moreover,
with our edge-counting convention, \(J(u)=2N\). Hence
\(C_{N,\alpha}\leq2N<+\infty\).

It remains to consider the case \(2<\alpha<2^*\). To prove that
\(C_{N,\alpha}=0\), fix any
\(u\in X\cap C_c(\mathbb Z^N)\setminus\{0\}\) and, for
\(k\in\mathbb N\), define \(u_k:=u/k\). Since \(u\in X\), we
clearly have \(u_k\in X\). Moreover,
\(\norm{u_k}_{\ell^\alpha(\mathbb Z^N)}
=\frac1k\norm{u}_{\ell^\alpha(\mathbb Z^N)}\).
By \(1-\sqrt{1-s}\leq s\) for \(s\in[0,1]\), we obtain
\[
\begin{aligned} 
	\frac{J(u_k)} 
	{\norm{u_k}_{\ell^\alpha(\mathbb Z^N)} 
		^{p_\alpha}}
	&\leq 
	\frac{ 
		k^{-2} 
		\displaystyle 
		\frac12\sum_{x\in\mathbb Z^N}\sum_{y\sim x} 
		\abs{\nabla_{xy}u}^{2} 
	}{ 
		k^{-p_\alpha}
		\norm{u}_{\ell^\alpha(\mathbb Z^N)} 
		^{p_\alpha}
	} 
	\\ 
	&= 
	k^{p_\alpha-2}
	\frac{ 
		\norm{u}_{D^{1,2}(\mathbb Z^N)}^2 
	}{ 
		\norm{u}_{\ell^\alpha(\mathbb Z^N)} 
		^{p_\alpha}
	}. 
\end{aligned} 
\]
Since \(\alpha<2^*\), we have \(p_\alpha<2\).
Therefore,
\(k^{p_\alpha-2}\longrightarrow0\) as \(k\to\infty\).
Therefore,
\[
	C_{N,\alpha}=0.
\]
\end{proof}

We now prove Theorem~\ref{thm:main}.

\begin{proof}
By Lemma~\ref{lem:estimate}, for \(\alpha=2^*\), we have
\(\frac{N2^*}{N+2^*}=p_{2^*}=2\) and \(\kappa_2=1/2\), and hence
\[
	C_{N,2^*}\geq\frac12\mathcal S_2.
\]

To prove that
\[
C_{N,2^*}\leq\frac12\mathcal S_2,
\]
let \(u\in D^{1,2}(\mathbb Z^N)\setminus\{0\}\) and set
\(u_\varepsilon:=\varepsilon u\).
Since
\[
	\norm{\nabla_Eu}_{\ell^\infty(E)}
	\leq
	\norm{u}_{D^{1,2}(\mathbb Z^N)},
\]
we have \(u_\varepsilon\in X\) for all sufficiently small
\(\varepsilon>0\). Moreover,
\[
	\varepsilon^{-2}
	\left(
	1-\sqrt{1-\abs{\nabla_{xy}u_\varepsilon}^{2}}
	\right)
	=
	\frac{\abs{\nabla_{xy}u}^{2}}
	{1+\sqrt{1-\varepsilon^2\abs{\nabla_{xy}u}^{2}}}
	\leq
	\abs{\nabla_{xy}u}^{2}.
\]
By the Lebesgue dominated convergence theorem,
\[
	\lim_{\varepsilon\to0}
	\varepsilon^{-2}J(u_\varepsilon)
	=
	\frac14
	\sum_{x\in\mathbb Z^N}\sum_{y\sim x}
	\abs{\nabla_{xy}u}^{2}
	=
	\frac12\norm{u}_{D^{1,2}(\mathbb Z^N)}^2.
\]
Therefore,
\[
	C_{N,2^*}
	\leq
	\frac{J(u_\varepsilon)}
	{\norm{u_\varepsilon}_{\ell^{2^*}(\mathbb Z^N)}^2}
	=
	\frac{\varepsilon^{-2}J(u_\varepsilon)}
	{\norm{u}_{\ell^{2^*}(\mathbb Z^N)}^2}.
\]
Letting \(\varepsilon\to0\), we have
\[
	C_{N,2^*}
	\leq
	\frac12
	\frac{
		\displaystyle
		\norm{u}_{D^{1,2}(\mathbb Z^N)}^2
	}{
		\displaystyle
		\norm{u}_{\ell^{2^*}(\mathbb Z^N)}^2
	}.
\]
Since \(u\in D^{1,2}(\mathbb Z^N)\setminus\{0\}\) is arbitrary,
\[
	C_{N,2^*}\leq\frac12\mathcal S_2.
\]
Thus
\[
	C_{N,2^*}=\frac12\mathcal S_2.
\]

It remains to prove that \(C_{N,2^*}\) is not attained. Arguing
indirectly, suppose that \(C_{N,2^*}\) is attained by a function
\(u\in X\setminus\{0\}\). Since \(\mathbb Z^N\) is connected, if
\(\nabla_{xy}u=0\) for every \(x\sim y\), then \(u\) is constant; as
\(u\in\ell^{2^*}(\mathbb Z^N)\), this would force \(u=0\). Hence there
exist \(x_0\sim y_0\) such that \(\nabla_{x_0y_0}u\neq0\). Since
\[
	1-\sqrt{1-s}>\frac12s,
	\qquad s\in(0,1],
\]
we obtain
\[
	J(u)
	>
	\frac14
	\sum_{x\in\mathbb Z^N}\sum_{y\sim x}
	\abs{\nabla_{xy}u}^{2}
	=
	\frac12\norm{u}_{D^{1,2}(\mathbb Z^N)}^2.
\]
Consequently,
\[
	C_{N,2^*}
	=
	\frac{J(u)}
	{\norm{u}_{\ell^{2^*}(\mathbb Z^N)}^2}
	>
	\frac12
	\frac{
		\displaystyle
		\norm{u}_{D^{1,2}(\mathbb Z^N)}^2
	}{
		\displaystyle
		\norm{u}_{\ell^{2^*}(\mathbb Z^N)}^2
	}
	\geq
	\frac12\mathcal S_2,
\]
which is the desired contradiction. Therefore, \(C_{N,2^*}\) is not attained.
\end{proof}

\section{The Sobolev supercritical case}\label{sec:supercritical}

In this section, we prove Theorem~\ref{achieved}. As explained in the
Introduction, to obtain the boundedness of minimizing
sequences, we consider the following constrained minimization problem.
For \(\alpha>2^*\) and \(t>0\), define
\[
	m(t)
	:=
	\inf\left\{
	J(u):
	u\in X,\
	\norm{u}_{\ell^\alpha(\mathbb Z^N)}=t
	\right\}.
\]
We first study the constrained problem and characterize
\(C_{N,\alpha}\) in terms of the quotient
\(m(t)/t^{p_\alpha}\). To show that the
infimum of this quotient is attained, we prove its lower
semicontinuity and study its behavior as \(t\to0^+\) and
\(t\to\infty\). The estimate as \(t\to\infty\) is obtained by using the
\(Q_1\)-interpolation.

\subsection{The constrained problem}

To obtain the compactness needed for the constrained problem,
we first establish a P\'olya--Szeg\H{o}-type inequality for the
functional \(J\).

\begin{lemma}\label{lem:energy-rearrangement}
For every \(u\in X\),
\[
	u^*\in X
	\qquad\text{and}\qquad
	J(u^*)\leq J(u).
\]
\end{lemma}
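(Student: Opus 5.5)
The plan is to reduce the statement to a general P\'olya--Szeg\H{o}-type inequality for convex edge functionals on \(\mathbb Z^N\), which we take from \cite{Hajaiej2026}. Two steps come first.

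First, replace \(u\) by \(|u|\). For every edge we have \(\bigl||u(y)|-|u(x)|\bigr|\leq|u(y)-u(x)|\). Hence \(|u|\in X\), with \(|u|\in D^{1,2}(\mathbb Z^N)\) by Lemma~\ref{lem:finite-energy-characterization}, since \(|u|\in\ell^{2^*}\) and \(|u|\) has finite edge energy. Moreover \(J(|u|)\leq J(u)\), because \(s\mapsto 1-\sqrt{1-s^2}\) is nondecreasing on \([0,1]\). Since \(u^*=R_{\mathbb Z^N}|u|\) by definition, we may assume \(u\geq0\).

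Second, obtain the gradient bound \(|\nabla_{xy}u^*|\leq 1\). We will use the \(\ell^\infty\)-type (Lipschitz) nonexpansivity of the discrete Schwarz rearrangement: \(\|\nabla_E u^*\|_{\ell^\infty(E)}\leq\|\nabla_E u\|_{\ell^\infty(E)}\). If this is not directly available in \cite{Hajaiej2026}, we will derive it from the general inequality
\[
\sum_{x}\sum_{y\sim x}\Phi(|\nabla_{xy}u^*|)\leq\sum_{x}\sum_{y\sim x}\Phi(|\nabla_{xy}u|)
\]
for convex nondecreasing \(\Phi:[0,\infty)\to[0,\infty)\) with \(\Phi(0)=0\). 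Apply it with \(\Phi_k(s)=\bigl((s-1)_+\bigr)^k\), or simply \(\Phi(s)=(s-1)_+\). The right-hand side vanishes because \(|\nabla_{xy}u|\leq 1\), so the left-hand side vanishes and \(|\nabla_{xy}u^*|\leq1\). The same inequality with \(\Phi(s)=s^2\) gives \(\|u^*\|_{D^{1,2}}\leq\|u\|_{D^{1,2}}<\infty\). Together with \(u^*\in\ell^{2^*}\) from \eqref{eq:rearrangement-norm-preserving}, Lemma~\ref{lem:finite-energy-characterization} then yields \(u^*\in D^{1,2}(\mathbb Z^N)\), so \(u^*\in X\).

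For the energy inequality we cannot apply the convex P\'olya--Szeg\H{o} inequality directly to \(\Psi(s)=1-\sqrt{1-s^2}\), since \(\Psi\) is only defined on \([0,1]\). Instead, extend \(\Psi\) to a convex nondecreasing function \(\widetilde\Psi\) on \([0,\infty)\). One option is to truncate: use \(\Psi_\delta(s)=1-\sqrt{1-(1-\delta)^2 s^2}\) on \([0,1]\), continued by its tangent line beyond \(s=1\). Then apply the inequality to \(\widetilde\Psi\). Both \(u\) and \(u^*\) have all edge differences in \([0,1]\) by the second step, so \(\widetilde\Psi\) agrees with \(\Psi\) (or \(\Psi_\delta\)) on every term. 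Letting \(\delta\to0\) by monotone convergence gives \(J(u^*)\leq J(u)\). Convexity of \(\Psi\) on \([0,1]\) is immediate from \(\Psi''>0\). The extension \(\widetilde\Psi\) is convex because it is \(C^1\) at the junction and linear afterwards, and it stays nondecreasing because \(\Psi'\geq0\).

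The main obstacle is the precise form of the P\'olya--Szeg\H{o} inequality available in \cite{Hajaiej2026}. It may be stated only for \(\Phi(s)=s^p\), or only for finitely supported functions. If only power functions are covered, we will try to pass to general convex \(\Phi\) through the representation of a convex function as a supremum or integral of functions \((s-a)_+\). The key ingredient then is an edge-wise level-set or majorization argument of the type used in the proof there, showing that the multiset of edge differences of \(u^*\) is weakly submajorized by that of \(u\). If the result is stated only for finitely supported \(u\), we approximate by \((u-\varepsilon)_+\), which has finite support and satisfies \(((u-\varepsilon)_+)^*=(u^*-\varepsilon)_+\). Its edge differences are dominated by those of \(u\), so the inequality passes to the limit \(\varepsilon\to0\) by Fatou's lemma on the left and monotone convergence on the right.
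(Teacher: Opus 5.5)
Your proposal is correct and follows essentially the paper's route: reduce to \(|u|\), replace the Born--Infeld density by convex nondecreasing regularizations with affine continuation, apply the discrete Riesz/supermodularity inequality of \cite[Theorem~\(5.7'\)]{Hajaiej2026} to the finitely supported truncations \((u-\varepsilon)_+\), and then pass to the limit. The differences are minor. You obtain \(|\nabla_{xy}u^*|\le1\) directly from \(\Phi(s)=(s-1)_+\), whereas the paper uses the blow-up of \(H_n\) together with the dominating function \(h\); and you regularize by the dilation \(\Psi_\delta\) with monotone convergence, whereas the paper uses \(H_n\) with dominated convergence (some such regularization is forced, since \(\Psi'(1^-)=+\infty\) rules out any convex extension of \(\Psi\) itself to \([0,\infty)\)).
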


\begin{proof}
For every \(x\sim y\),
\[
	\abs{\nabla_{xy}\abs{u}}
	=
	\bigl|\abs{u(y)}-\abs{u(x)}\bigr|
	\leq
	\abs{\nabla_{xy}u}.
\]
Since the map \(s\mapsto1-\sqrt{1-s^2}\) is increasing on \([0,1]\),
we have \(\abs{u}\in X\) and \(J(\abs{u})\leq J(u)\). Moreover, by
definition, \(u^*=(\abs{u})^*\). It is therefore enough to consider
\(u\geq0\).

Following the regularization argument used in the proof of
\cite[Lemma~4.3]{BonheureDeCosterDerlet2012}, we handle the square-root
singularity at the endpoint \(1\) by approximating the Born--Infeld energy
density with nonnegative, increasing, and convex functions. The subsequent
rearrangement argument is carried out in the discrete setting using the
discrete Riesz inequality of \cite{Hajaiej2026}.
For \(n\geq2\), define \(H_n:[0,\infty)\to[0,\infty)\) by
\[
	H_n(b)
	:=
	\begin{cases}
		\displaystyle
		1-\sqrt{1-b},
		& 0\leq b<1-\dfrac1{n^2},
		\\[2mm]
		\displaystyle
		1-\dfrac1n
		+\dfrac n2
		\left(
			b-1+\dfrac1{n^2}
		\right),
		& b\geq1-\dfrac1{n^2}.
	\end{cases}
\]
Set \(F_n(s):=H_n(s^2)\) for \(s\geq0\).
The function \(H_n\) is nonnegative and increasing. It is convex
because the derivative of \(1-\sqrt{1-b}\) increases up to \(n/2\) at
\(b=1-1/n^2\), which equals the slope of the affine continuation. Since
\(s\mapsto s^2\) is convex and \(H_n\) is convex and nondecreasing,
\(F_n=H_n\circ(s\mapsto s^2)\) is also convex and nondecreasing.
Hence, by
\cite[Proposition~5.4(ii)]{Hajaiej2026}, the function
\[
	\mathcal G_n(s,t)
	:=
	-F_n\bigl(\abs{s-t}\bigr)
	=
	-H_n\bigl(\abs{s-t}^2\bigr)
\]
is supermodular.

Let \(K(r):=\mathbf 1_{[0,1]}(r)\). To apply
\cite[Theorem~\(5.7'\)]{Hajaiej2026}, for \(k\in\mathbb N\), set
\[
	u_k:=\left(u-\frac1k\right)_+.
\]
We first show that
\begin{equation}\label{eq:Gn-integrability}
\left|
	\sum_{x,y\in\mathbb Z^N}
	\mathcal G_n(u_k(x),0)K(d(x,y))
\right|
+
\left|
	\sum_{x,y\in\mathbb Z^N}
	\mathcal G_n(0,u_k(y))K(d(x,y))
\right|
<\infty.
\end{equation}
Since \(u\in\ell^{2^*}(\mathbb Z^N)\), the function \(u_k\) has
finite support. Moreover, \(\mathcal G_n(0,0)=0\) and
\(K(d(x,y))=0\) whenever \(d(x,y)>1\). Hence,
\[
\left|
	\sum_{x,y\in\mathbb Z^N}
	\mathcal G_n(u_k(x),0)K(d(x,y))
\right|
<\infty
\qquad\text{and}\qquad
\left|
	\sum_{x,y\in\mathbb Z^N}
	\mathcal G_n(0,u_k(y))K(d(x,y))
\right|
<\infty.
\]
We conclude \eqref{eq:Gn-integrability}.

The truncation commutes with Schwarz rearrangement. Indeed, for every
\(s>0\),
\[
 \{(u-1/k)_+>s\}=\{u>s+1/k\},
\]
so the level sets of \((u-1/k)_+^*\) and \((u^*-1/k)_+\) have the same
cardinality and the same rearranged shape. Hence
\[
	u_k^*=\left(u^*-\frac1k\right)_+.
\]
Applying \cite[Theorem~\(5.7'\)]{Hajaiej2026} with
\(G=\mathcal G_n\), \(H=K\), and \(u=v=u_k\), and using
\(\mathcal G_n(s,s)=0\), we obtain
\[
	\sum_{x\in\mathbb Z^N}\sum_{y\sim x}
	H_n\bigl(\abs{\nabla_{xy}u_k}^2\bigr)
	\geq
	\sum_{x\in\mathbb Z^N}\sum_{y\sim x}
	H_n\bigl(\abs{\nabla_{xy}u_k^*}^2\bigr).
\]
For every \(x\sim y\),
\[
	H_n\bigl(\abs{\nabla_{xy}u_k}^2\bigr)
	\uparrow
	H_n\bigl(\abs{\nabla_{xy}u}^2\bigr),
	\qquad
	H_n\bigl(\abs{\nabla_{xy}u_k^*}^2\bigr)
	\uparrow
	H_n\bigl(\abs{\nabla_{xy}u^*}^2\bigr)
\]
as \(k\to\infty\). To see the monotonicity explicitly, after
ordering two nonnegative endpoint values as \(a\geq b\), the quantity
\((a-c)_+-(b-c)_+\) increases to \(a-b\) as \(c\downarrow0\).
Since \(H_n\) is increasing and all terms are nonnegative, letting
\(k\to\infty\), we obtain
\begin{equation}\label{eq:edge-approximating-energy}
	\sum_{x\in\mathbb Z^N}\sum_{y\sim x}
	H_n\bigl(\abs{\nabla_{xy}u}^2\bigr)
	\geq
	\sum_{x\in\mathbb Z^N}\sum_{y\sim x}
	H_n\bigl(\abs{\nabla_{xy}u^*}^2\bigr).
\end{equation}
Let
\[
	\mathsf E
	:=
	\left\{
		(x,y)\in\mathbb Z^N\times\mathbb Z^N:
		x\sim y
	\right\}
\]
and define
\[
	A
	:=
	\left\{
		(x,y)\in\mathsf E:
		\abs{\nabla_{xy}u}\geq\frac12
	\right\}.
\]
Since \(u\in D^{1,2}(\mathbb Z^N)\),
\[
	\frac14\abs{A}
	\leq
	\sum_{(x,y)\in A}
	\abs{\nabla_{xy}u}^2
	<\infty.
\]
Hence \(A\) is finite.

Define \(h:\mathsf E\to[0,\infty)\) by
\[
	h(x,y)
	:=
	\begin{cases}
		1,
		& (x,y)\in A,
		\\
		\abs{\nabla_{xy}u}^2,
		& (x,y)\notin A.
	\end{cases}
\]
Then \(\sum_{(x,y)\in\mathsf E}h(x,y)<\infty\).
Moreover, for every \(n\geq2\) and \((x,y)\in\mathsf E\),
\[
	0\leq H_n\bigl(\abs{\nabla_{xy}u}^2\bigr)\leq h(x,y).
\]
Indeed,
\[
	H_n\bigl(\abs{\nabla_{xy}u}^2\bigr)
	\leq
	\begin{cases}
		1=h(x,y),
		& (x,y)\in A,
		\\[1mm]
		1-\sqrt{1-\abs{\nabla_{xy}u}^2}
		\leq
		\abs{\nabla_{xy}u}^2=h(x,y),
		& (x,y)\in\mathsf E\setminus A.
	\end{cases}
\]
Consequently,
\begin{equation}\label{eq:approximating-energy-uniform-bound}
	\sup_{n\geq2}
	\sum_{x\in\mathbb Z^N}\sum_{y\sim x}
	H_n\bigl(\abs{\nabla_{xy}u}^2\bigr)
	<\infty.
\end{equation}

We next show that \(u^*\in X\). Suppose that
\(\abs{\nabla_{x_0y_0}u^*}^2>1\) for some \(x_0\sim y_0\). Then
\[
	H_n\bigl(\abs{\nabla_{x_0y_0}u^*}^2\bigr)
	\longrightarrow+\infty,
\]
which contradicts \eqref{eq:edge-approximating-energy} and
\eqref{eq:approximating-energy-uniform-bound}. Therefore,
\[
	\abs{\nabla_{xy}u^*}\leq1,
	\qquad x\sim y.
\]
Moreover, \(H_2(b)\geq b/2\) for every \(b\geq0\).
Taking \(n=2\) in \eqref{eq:edge-approximating-energy}, we obtain
\[
\begin{aligned}
	\frac12
	\sum_{x\in\mathbb Z^N}\sum_{y\sim x}
	\abs{\nabla_{xy}u^*}^2
	&\leq
	\sum_{x\in\mathbb Z^N}\sum_{y\sim x}
	H_2\bigl(\abs{\nabla_{xy}u^*}^2\bigr)
	\\
	&\leq
	\sum_{x\in\mathbb Z^N}\sum_{y\sim x}
	H_2\bigl(\abs{\nabla_{xy}u}^2\bigr)
	<\infty.
\end{aligned}
\]
Moreover, by
\eqref{eq:rearrangement-norm-preserving}, we have
\(u^*\in\ell^{2^*}(\mathbb Z^N)\). 
The preceding estimate gives finite edge \(2\)-energy. Applying
Lemma~\ref{lem:finite-energy-characterization} with \(p=2\), we conclude
that \(u^*\in D^{1,2}(\mathbb Z^N)\); together with
\(|\nabla_{xy}u^*|\leq1\), this yields \(u^*\in X\).

For every \(x\sim y\),
\[
H_n\bigl(\abs{\nabla_{xy}u}^2\bigr)
\longrightarrow
1-\sqrt{1-\abs{\nabla_{xy}u}^2}
\quad\text{as }n\to\infty.
\]
By the Lebesgue dominated convergence theorem, we obtain
\[
\begin{aligned}
	&\lim_{n\to\infty}
	\sum_{x\in\mathbb Z^N}\sum_{y\sim x}
	H_n\bigl(\abs{\nabla_{xy}u}^2\bigr)
	\\
	&\qquad=
	\sum_{x\in\mathbb Z^N}\sum_{y\sim x}
	\left(
		1-\sqrt{1-\abs{\nabla_{xy}u}^2}
	\right).
\end{aligned}
\]
Since \(u^*\in X\), repeating the same argument yields
\[
\begin{aligned}
	&\lim_{n\to\infty}
	\sum_{x\in\mathbb Z^N}\sum_{y\sim x}
	H_n\bigl(\abs{\nabla_{xy}u^*}^2\bigr)
	\\
	&\qquad=
	\sum_{x\in\mathbb Z^N}\sum_{y\sim x}
	\left(
		1-\sqrt{1-\abs{\nabla_{xy}u^*}^2}
	\right).
\end{aligned}
\]
We conclude that
\[
	\sum_{x\in\mathbb Z^N}\sum_{y\sim x}
	\left(
		1-\sqrt{1-\abs{\nabla_{xy}u}^2}
	\right)
	\geq
	\sum_{x\in\mathbb Z^N}\sum_{y\sim x}
	\left(
		1-\sqrt{1-\abs{\nabla_{xy}u^*}^2}
	\right).
\]
\end{proof}

Using Lemmas~\ref{lem:energy-rearrangement} and
\ref{lem:compact-symmetric-embedding}, we obtain a minimizer of
\(m(t)\).

\begin{proposition}\label{prop:constrained-minimizer}
Let \(\alpha>2^*\) and \(t>0\). Then \(m(t)\) is attained by a
nonnegative Schwarz symmetric function.
\end{proposition}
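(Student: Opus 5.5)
We use the direct method, restricted to symmetric functions. Fix \(\alpha>2^*\) and \(t>0\). By the first lemma of Subsection~2.2 the constraint set \(\{u\in X:\|u\|_\alpha=t\}\) is nonempty, so \(m(t)<\infty\), and trivially \(m(t)\ge0\). Take a minimizing sequence \(\{u_n\}\) with \(\|u_n\|_\alpha=t\) and \(J(u_n)\to m(t)\).

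\emph{Step 1: symmetrize.} Replace each \(u_n\) by \(u_n^*\). Lemma~\ref{lem:energy-rearrangement} gives \(u_n^*\in X\) and \(J(u_n^*)\le J(u_n)\). By \eqref{eq:rearrangement-norm-preserving}, \(\|u_n^*\|_\alpha=\|u_n\|_\alpha=t\). Hence \(\{u_n^*\}\) is again a minimizing sequence, and it lies in \(\Sigma\cap X\). From now on we write \(u_n\) for \(u_n^*\).

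\emph{Step 2: compactness.} The coercivity lemma gives \(\frac12\|u_n\|_{D^{1,2}}^2\le J(u_n)\), so \(\{u_n\}\) is bounded in \(D^{1,2}(\mathbb Z^N)\). Pass to a subsequence with \(u_n\rightharpoonup u\) weakly in \(D^{1,2}(\mathbb Z^N)\). Weak convergence implies pointwise convergence \(u_n(x)\to u(x)\), as in the proof that \(X\) is weakly closed. By Lemma~\ref{lem:compact-symmetric-embedding}, a further subsequence converges strongly in \(\ell^\alpha(\mathbb Z^N)\). Pointwise convergence identifies that strong limit as \(u\), so \(\|u\|_\alpha=\lim\|u_n\|_\alpha=t\), and in particular \(u\neq0\).

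\emph{Step 3: identify \(u\) as a symmetric minimizer.} Since \(X\) is weakly closed, \(u\in X\). Weak lower semicontinuity of \(J\) gives \(J(u)\le\liminf J(u_n)=m(t)\). Since \(u\) is admissible for \(m(t)\), also \(J(u)\ge m(t)\), so \(J(u)=m(t)\).

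It remains to show \(u\ge0\) and \(u=u^*\); we expect this to be the main obstacle, because the class of Schwarz symmetric functions must be closed under pointwise limits. Nonnegativity passes to the pointwise limit directly. For symmetry there are two routes.

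\begin{itemize}
\item \emph{Preferred route:} bypass closedness. Lemma~\ref{lem:energy-rearrangement} gives \(u^*\in X\) and \(J(u^*)\le J(u)=m(t)\), while \(\|u^*\|_\alpha=t\). So \(u^*\) is itself a minimizer of \(m(t)\), and it is nonnegative and Schwarz symmetric by construction.
\item \emph{Alternative route:} invoke the closedness of \(\Sigma\) under pointwise convergence, which should follow from \cite{Hajaiej2026}: the rearranged level sets are fixed nested shapes, and this ordering is preserved under pointwise limits.
\end{itemize}

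The preferred route needs only results already established, so that is the one we will use.
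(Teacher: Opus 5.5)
Your proof is correct and follows the paper's argument: symmetrize the minimizing sequence via Lemma~\ref{lem:energy-rearrangement}, bound it in $D^{1,2}(\mathbb Z^N)$ by coercivity, then pass to the limit using the compact embedding of $\Sigma$ into $\ell^\alpha(\mathbb Z^N)$ and the weak lower semicontinuity of $J$. The only difference is the last step: the paper shows the weak limit $u$ is itself nonnegative and Schwarz symmetric by citing closedness under pointwise limits \cite[Proposition~4.15]{Hajaiej2026}, whereas your preferred route applies Lemma~\ref{lem:energy-rearrangement} to $u$ and takes $u^*$ as the minimizer, which is equally valid and avoids that citation.
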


\begin{proof}
Let \(\{v_n\}\subset X\) be a minimizing sequence for \(m(t)\) such that
\[
	\norm{v_n}_{\ell^\alpha(\mathbb Z^N)}=t,
	\qquad
	J(v_n)\longrightarrow m(t).
\]
By Lemma~\ref{lem:energy-rearrangement},
\[
	v_n^*\in X,
	\qquad
	\norm{v_n^*}_{\ell^\alpha(\mathbb Z^N)}=t,
	\qquad
	J(v_n^*)\leq J(v_n).
\]
Since \(J\) is coercive, \(\{v_n^*\}\) is bounded in
\(D^{1,2}(\mathbb Z^N)\). Passing to a subsequence if necessary,
\[
	v_n^*\rightharpoonup u
	\qquad\text{in }D^{1,2}(\mathbb Z^N)
\]
for some \(u\in X\). Since \(\alpha>2^*\), the compactness of the
embedding \(\Sigma\hookrightarrow\ell^\alpha(\mathbb Z^N)\) implies that
\[
	v_n^*\longrightarrow u
	\qquad\text{in }\ell^\alpha(\mathbb Z^N).
\]
In particular, \(v_n^*(x)\to u(x)\) for every \(x\in\mathbb Z^N\).
Since each \(v_n^*\) is nonnegative and Schwarz symmetric,
\cite[Proposition~4.15]{Hajaiej2026} implies that \(u\) is also
nonnegative and Schwarz symmetric. Moreover,
\(\norm{u}_{\ell^\alpha(\mathbb Z^N)}=t\).
Since
\[
m(t)\leq J(v_n^*)\leq J(v_n)\longrightarrow m(t),
\]
we have \(J(v_n^*)\to m(t)\). By the weak lower semicontinuity of \(J\),
\[
m(t)
\leq
J(u)
\leq
\liminf_{n\to\infty}J(v_n^*)
=
m(t).
\]
Thus \(J(u)=m(t)\). Moreover, \(u\) is a nonnegative Schwarz symmetric
minimizer of \(m(t)\).
\end{proof}

Combining the definition of \(m(t)\) with that of \(C_{N,\alpha}\),
we obtain the following characterization of \(C_{N,\alpha}\).

\begin{proposition}\label{prop:quotient-constrained-relation}
Let \(\alpha>2^*\). Then
\[
C_{N,\alpha}
=
\inf_{t>0}
\frac{m(t)}
{t^{p_\alpha}}.
\]
\end{proposition}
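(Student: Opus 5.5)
The identity follows by slicing the admissible class according to the value of the \(\ell^\alpha\)-norm. The key point is that the quotient defining \(C_{N,\alpha}\) depends on \(u\) only through \(J(u)\) and \(t:=\norm{u}_{\ell^\alpha(\mathbb Z^N)}\). By definition,
\[
C_{N,\alpha}=\inf_{u\in(X\cap\ell^\alpha(\mathbb Z^N))\setminus\{0\}}\frac{J(u)}{\norm{u}_{\ell^\alpha(\mathbb Z^N)}^{p_\alpha}},
\]
since \(\bigl(\sum|u|^\alpha\bigr)^{N/(N+\alpha)}=\norm{u}_{\alpha}^{\alpha N/(N+\alpha)}=\norm{u}_\alpha^{p_\alpha}\). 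Moreover, for \(\alpha>2^*\) the discrete Sobolev inequality \eqref{eq:discrete-sobolev-inequality} gives \(X\subset\ell^\alpha(\mathbb Z^N)\), so the admissible class is simply \(X\setminus\{0\}\). It can then be written as the disjoint union over \(t>0\) of the constraint sets \(\{u\in X:\norm{u}_\alpha=t\}\), each of which is nonempty by part (i) of the lemma on the admissible set. Note that \(u\neq0\) is equivalent to \(\norm{u}_\alpha>0\).

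The first step is to show \(C_{N,\alpha}\geq\inf_{t>0}m(t)/t^{p_\alpha}\). Given \(u\in X\setminus\{0\}\), set \(t=\norm{u}_\alpha>0\). Then \(J(u)\geq m(t)\) by definition of \(m\), so
\[
\frac{J(u)}{\norm{u}_\alpha^{p_\alpha}}\geq\frac{m(t)}{t^{p_\alpha}}\geq\inf_{s>0}\frac{m(s)}{s^{p_\alpha}}.
\]
Taking the infimum over \(u\) gives the inequality.

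The second step is the reverse inequality. Fix \(t>0\) and \(\varepsilon>0\), and choose \(u\in X\) with \(\norm{u}_\alpha=t\) and \(J(u)\leq m(t)+\varepsilon\). Alternatively, one can use the minimizer supplied by Proposition~\ref{prop:constrained-minimizer}, which makes \(\varepsilon\) unnecessary. Then \(u\neq0\), and
\[
C_{N,\alpha}\leq\frac{J(u)}{t^{p_\alpha}}\leq\frac{m(t)+\varepsilon}{t^{p_\alpha}}.
\]
Letting \(\varepsilon\to0\) and then taking the infimum over \(t\) gives \(C_{N,\alpha}\leq\inf_{t>0}m(t)/t^{p_\alpha}\).

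There is no genuine obstacle here. The only points to check are that each \(m(t)\) is finite, which holds because the constraint sets are nonempty, and the exponent bookkeeping \(\alpha N/(N+\alpha)=p_\alpha\).
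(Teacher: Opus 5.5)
Your proposal is correct and matches the paper's proof: both directions come from slicing \(X\setminus\{0\}\) by \(t=\norm{u}_{\ell^\alpha(\mathbb Z^N)}\). The bound \(J(u)\geq m(t)\) gives the lower inequality. Taking the infimum over each constraint set gives the upper inequality; your choice of an \(\varepsilon\)-almost minimizer, or of the minimizer from Proposition~\ref{prop:constrained-minimizer}, is just a more explicit version of the paper's step, and your checks of the exponent identity and of \(X\subset\ell^\alpha(\mathbb Z^N)\) are correct.
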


\begin{proof}
For every \(t>0\), by the definition of \(C_{N,\alpha}\), we have
\[
C_{N,\alpha} 
\leq 
\frac{J(v)} 
{\norm{v}_{\ell^\alpha(\mathbb Z^N)}^{p_\alpha}}
\]
for every \(v\in X\setminus\{0\}\) satisfying
\(\norm{v}_{\ell^\alpha(\mathbb Z^N)}=t\).
For every \(t>0\),
\[
C_{N,\alpha} 
\leq 
\inf_{\substack{v\in X\\ 
		\norm{v}_{\ell^\alpha(\mathbb Z^N)}=t}} 
\frac{J(v)} 
{\norm{v}_{\ell^\alpha(\mathbb Z^N)} 
	^{p_\alpha}}
= 
\frac{m(t)} 
{t^{p_\alpha}}.
\]
Hence
\[
C_{N,\alpha}
\leq
\inf_{t>0}
\frac{m(t)}
{t^{p_\alpha}}.
\]
On the other hand, let \(u\in X\setminus\{0\}\) be arbitrary and set
\[
	t:=\norm{u}_{\ell^\alpha(\mathbb Z^N)}>0.
\]
By the definition of \(m(t)\),
\[
	J(u)\geq m(t).
\]
Therefore,
\[
\frac{J(u)} 
{\norm{u}_{\ell^\alpha(\mathbb Z^N)}^{p_\alpha}}
\geq 
\frac{m(t)}{t^{p_\alpha}}
\geq 
\inf_{s>0} 
\frac{m(s)}{s^{p_\alpha}}.
\]
Since \(u\in X\setminus\{0\}\) is arbitrary, we have
\[
C_{N,\alpha}
\geq
\inf_{s>0}
\frac{m(s)}
{s^{p_\alpha}}.
\]
Combining the two inequalities, we conclude that
\[
C_{N,\alpha}
=
\inf_{t>0}
\frac{m(t)}
{t^{p_\alpha}}.
\]
\end{proof}

Finally, we prove the lower semicontinuity of the above quotient.

\begin{lemma}\label{lem:mt-lower-semicontinuity}
Let \(\alpha>2^*\). Then
\[
t\longmapsto
\frac{m(t)}
{t^{p_\alpha}}
\]
is lower semicontinuous on \((0,\infty)\).
\end{lemma}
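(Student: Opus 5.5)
Since \(t\mapsto t^{p_\alpha}\) is continuous and positive on \((0,\infty)\), it suffices to show that \(m\) itself is lower semicontinuous. Fix \(t_0>0\) and a sequence \(t_n\to t_0\). We must show \(m(t_0)\leq\liminf_{n\to\infty}m(t_n)\). Passing to a subsequence, we may assume that \(\liminf m(t_n)\) is a limit \(L\). We may also assume \(L<\infty\); this is automatic, because \(m(t)\leq 2N\,(\text{something finite})\) by the test functions \(u_n\) from the nonemptiness lemma, but we only need finiteness along the subsequence.

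By Proposition~\ref{prop:constrained-minimizer}, each \(m(t_n)\) is attained by some \(u_n\in\Sigma\cap X\) with \(\|u_n\|_{\ell^\alpha}=t_n\) and \(J(u_n)=m(t_n)\). Coercivity, \(J(u)\geq\frac12\|u\|_{D^{1,2}}^2\), gives a uniform \(D^{1,2}\) bound on \(\{u_n\}\). Extract a subsequence with \(u_n\rightharpoonup u\) in \(D^{1,2}(\mathbb Z^N)\). Weak closedness of \(X\) gives \(u\in X\). Since \(\{u_n\}\subset\Sigma\) is bounded and \(\alpha>2^*\), Lemma~\ref{lem:compact-symmetric-embedding} gives \(u_n\to u\) in \(\ell^\alpha(\mathbb Z^N)\) along a further subsequence. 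The strong limit coincides with the weak one, because both imply pointwise convergence. Hence \(\|u\|_{\ell^\alpha}=\lim t_n=t_0\), and in particular \(u\neq0\).

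Weak lower semicontinuity of \(J\) on \(X\) then yields
\[
m(t_0)\leq J(u)\leq\liminf_{n\to\infty}J(u_n)=L=\liminf_{n\to\infty}m(t_n),
\]
and dividing by \(t_n^{p_\alpha}\to t_0^{p_\alpha}\) gives the claim for the quotient.

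The only delicate point is ensuring that the \(\ell^\alpha\)-constraint passes to the limit. Weak convergence alone would only give \(\|u\|_\alpha\leq t_0\), and \(m\) is not obviously monotone. This is exactly why we use symmetric minimizers together with the compact embedding of \(\Sigma\) rather than arbitrary near-minimizers.
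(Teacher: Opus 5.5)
Your proof is correct and takes essentially the same route as the paper. You take Schwarz symmetric minimizers $u_n\in\Sigma$ of $m(t_n)$ from Proposition~\ref{prop:constrained-minimizer}, use coercivity and Lemma~\ref{lem:compact-symmetric-embedding} to pass the constraint $\|u_n\|_{\ell^\alpha}=t_n$ to the limit, and conclude by weak lower semicontinuity of $J$. The paper works with the quotient directly instead of first reducing to $m$, which makes no difference. Your parenthetical argument that $L<\infty$ is vague and unnecessary: the case $L=+\infty$ is trivial, exactly as the paper notes.
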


\begin{proof}
Let \(t_n\to t>0\). If
\[
\liminf_{n\to\infty}
\frac{m(t_n)}
{t_n^{p_\alpha}}
=
+\infty,
\]
there is nothing to prove. Otherwise, passing to a subsequence if necessary,
we may assume that
\begin{equation}\label{eq:mt-liminf-sequence}
	\frac{m(t_n)}
	{t_n^{p_\alpha}}
	\longrightarrow
	\liminf_{k\to\infty}
	\frac{m(t_k)}
	{t_k^{p_\alpha}}
	<\infty.
\end{equation}
For each \(n\), let \(u_n\in\Sigma\) be a minimizer of \(m(t_n)\). Then
\[
	\norm{u_n}_{\ell^\alpha(\mathbb Z^N)}=t_n,
	\qquad
	J(u_n)=m(t_n).
\]
Since \(t_n\to t>0\), the sequence
\(\{t_n^{p_\alpha}\}\) is bounded.
Since the quotient in \eqref{eq:mt-liminf-sequence} converges to a
finite number and \(t_n^{p_\alpha}
\to
t^{p_\alpha}>0\), it follows that
\(\sup_n m(t_n)<\infty\). Since
\(J(u_n)=m(t_n)\), the coercivity of \(J\) implies that
\(\{u_n\}\) is bounded in
\(D^{1,2}(\mathbb Z^N)\). Passing to a subsequence,
\[
	u_n\rightharpoonup u
	\qquad \text{as }n\to\infty
	\qquad\text{in }D^{1,2}(\mathbb Z^N)
\]
for some \(u\in X\). By Lemma~\ref{lem:compact-symmetric-embedding}, we have
\[
	u_n\longrightarrow u
	\qquad \text{as }n\to\infty
	\qquad\text{in }\ell^\alpha(\mathbb Z^N)
\]
and therefore
\(\norm{u}_{\ell^\alpha(\mathbb Z^N)}=t\).
The weak lower semicontinuity of \(J\) yields
\[
\frac{m(t)}{t^{p_\alpha}}
\leq
\frac{J(u)}{t^{p_\alpha}}
\leq
\liminf_{n\to\infty}
\frac{J(u_n)}{t_n^{p_\alpha}}
=
\liminf_{n\to\infty}
\frac{m(t_n)}{t_n^{p_\alpha}}.
\]
\end{proof}

\subsection{The \texorpdfstring{\(Q_1\)}{Q1}-interpolation estimates}

We next introduce the \(Q_1\)-interpolation, which provides the
discrete-to-continuum comparison needed to estimate
\(\frac{m(t)}{t^{p_\alpha}}\) as \(t\to\infty\).

\begin{definition}
Let \(u:\mathbb Z^N\longrightarrow\mathbb R\). For \(x\in\mathbb Z^N\), set
\[
	Q_x:=x+[0,1)^N.
\]

\begin{enumerate}
	\item[\textup{(i)}]
	The function \(Pu:\mathbb R^N\to\mathbb R\) is defined by
	\[
		Pu(x+\theta):=u(x),
		\qquad
		x\in\mathbb Z^N,\quad \theta\in[0,1)^N.
	\]

	\item[\textup{(ii)}]
	The \(Q_1\) interpolant \(Iu:\mathbb R^N\to\mathbb R\) is defined by
	\[
		Iu(x+\theta)
		:=
		\sum_{\nu\in\{0,1\}^N}
		\lambda_\nu(\theta)\,u(x+\nu),
		\qquad
		x\in\mathbb Z^N,\quad \theta\in[0,1]^N,
	\]
	where
	\[
		\lambda_\nu(\theta)
		:=
		\prod_{j=1}^N
		\theta_j^{\nu_j}
		(1-\theta_j)^{1-\nu_j},
		\qquad
		\nu\in\{0,1\}^N.
	\]
\end{enumerate}
These local definitions agree on common faces of neighboring cubes, so \(Iu\) is well defined on \(\mathbb R^N\).
\end{definition}

\begin{example}
Consider the function
\[
	u(0)=1,\qquad
	u(1)=3,\qquad
	u(2)=2,\qquad
	u(3)=4.
\]
The functions \(Iu\) and \(Pu\) are illustrated below.

\begin{center}
	\begin{minipage}{0.47\textwidth}
		\centering
		\begin{tikzpicture}[x=1.15cm,y=0.75cm]
			\draw[->] (-0.25,0) -- (3.55,0)
				node[right] {\(x\)};
			\draw[->] (0,-0.25) -- (0,4.65)
				node[above] {\(u\)};

			\foreach \x in {0,1,2,3}
			{
				\draw (\x,0.08) -- (\x,-0.08)
					node[below=3pt] {\(\x\)};
			}

			\foreach \y in {1,2,3,4}
			{
				\draw (0.08,\y) -- (-0.08,\y)
					node[left=3pt] {\(\y\)};
			}

			\foreach \x in {1,2,3}
			{
				\draw[densely dashed] (\x,0) -- (\x,4.3);
			}

			\draw[very thick]
				(0,1) --
				(1,3) --
				(2,2) --
				(3,4);

			\fill (0,1) circle (2.2pt);
			\fill (1,3) circle (2.2pt);
			\fill (2,2) circle (2.2pt);
			\fill (3,4) circle (2.2pt);
		\end{tikzpicture}

		\vspace{0.25cm}

		\textup{(a) \(Iu\)}
	\end{minipage}
	\hfill
	\begin{minipage}{0.47\textwidth}
		\centering
		\begin{tikzpicture}[x=1.05cm,y=0.75cm]
			\draw[->] (-0.25,0) -- (4.35,0)
				node[right] {\(x\)};
			\draw[->] (0,-0.25) -- (0,4.65)
				node[above] {\(u\)};

			\foreach \x in {0,1,2,3,4}
			{
				\draw (\x,0.08) -- (\x,-0.08)
					node[below=3pt] {\(\x\)};
			}

			\foreach \y in {1,2,3,4}
			{
				\draw (0.08,\y) -- (-0.08,\y)
					node[left=3pt] {\(\y\)};
			}

			\foreach \x in {1,2,3,4}
			{
				\draw[densely dashed] (\x,0) -- (\x,4.3);
			}

			\draw[very thick] (0,1) -- (1,1);
			\draw[very thick] (1,3) -- (2,3);
			\draw[very thick] (2,2) -- (3,2);
			\draw[very thick] (3,4) -- (4,4);

			\fill (0,1) circle (2.2pt);
			\fill (1,3) circle (2.2pt);
			\fill (2,2) circle (2.2pt);
			\fill (3,4) circle (2.2pt);

			\draw[fill=white] (1,1) circle (2.2pt);
			\draw[fill=white] (2,3) circle (2.2pt);
			\draw[fill=white] (3,2) circle (2.2pt);
			\draw[fill=white] (4,4) circle (2.2pt);
		\end{tikzpicture}

		\vspace{0.25cm}

		\textup{(b) \(Pu\)}
	\end{minipage}
\end{center}
\end{example}

To apply the Sobolev inequality, we first show that
\(Iu\in D^{1,p}(\mathbb R^N)\) for any \(u\in D^{1,p}(\mathbb Z^N)\) and establish the following gradient
estimate.

\begin{lemma}\label{lem:Q1-gradient-estimate}
Let \(2\leq p<N\). Then, for any \(u\in D^{1,p}(\mathbb Z^N)\),
\(Iu\in D^{1,p}(\mathbb R^N)\) and
\[
	\int_{\mathbb R^N}
	\abs{\nabla Iu(\xi)}^p\,d\xi
	\leq
	\frac{N^{\frac p2-1}}{2}
	\sum_{x\in\mathbb Z^N}\sum_{y\sim x}
	\abs{\nabla_{xy}u}^p.
\]
\end{lemma}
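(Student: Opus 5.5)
We will prove the estimate cube by cube for \(u\in C_c(\mathbb Z^N)\), sum over cubes, and then pass to general \(u\in D^{1,p}(\mathbb Z^N)\) by density. Fix \(x\in\mathbb Z^N\) and \(\theta\in(0,1)^N\). Differentiating the product formula for \(\lambda_\nu\) gives, for each coordinate \(j\),
\[
\partial_j Iu(x+\theta)=\sum_{\nu'\in\{0,1\}^{N-1}}\lambda'_{\nu'}(\theta')\bigl(u(x+\nu'+e_j)-u(x+\nu')\bigr).
\]
Here \(\theta'\) is \(\theta\) with the \(j\)-th coordinate removed, \(\nu'\) ranges over the vertices of the face \(\{\nu_j=0\}\), and \(\lambda'_{\nu'}\ge0\) are the \((N-1)\)-dimensional \(Q_1\) weights, which sum to \(1\). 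Thus \(\partial_jIu\) on \(Q_x\) is a convex combination of the \(2^{N-1}\) edge differences of \(u\) along the \(j\)-direction edges of the cube. By convexity of \(s\mapsto|s|^p\) (Jensen),
\[
|\partial_jIu(x+\theta)|^p\le\sum_{\nu'}\lambda'_{\nu'}(\theta')\,|\nabla_{x+\nu',\,x+\nu'+e_j}u|^p .
\]

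Next we pass from partial derivatives to \(|\nabla Iu|^p\). Since \(p\ge2\), the power-mean inequality gives
\[
|\nabla Iu|^p=\Bigl(\sum_j|\partial_jIu|^2\Bigr)^{p/2}\le N^{\frac p2-1}\sum_j|\partial_jIu|^p .
\]
This is where the factor \(N^{\frac p2-1}\) comes from. Integrating over \(\theta\in[0,1]^N\) and using \(\int_{[0,1]^{N-1}}\lambda'_{\nu'}=2^{-(N-1)}\), we get
\[
\int_{Q_x}|\nabla Iu|^p\le N^{\frac p2-1}\,2^{-(N-1)}\sum_{\text{edges }e\subset \overline{Q_x}}|\nabla_eu|^p ,
\]
where the sum is over unoriented edges of the cube.

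We then sum over \(x\in\mathbb Z^N\). Each unoriented lattice edge lies in exactly \(2^{N-1}\) unit cubes, so this factor cancels the \(2^{-(N-1)}\). We are left with \(N^{\frac p2-1}\) times the sum over unoriented edges, which equals \(\frac{N^{\frac p2-1}}{2}\sum_x\sum_{y\sim x}|\nabla_{xy}u|^p\). The step needing care is exactly this bookkeeping of multiplicities (cube faces versus orientations), since it determines the constant \(\tfrac12\); the Jensen step itself is routine. We also note that \(Iu\) is continuous and piecewise polynomial, hence locally Lipschitz, so its distributional gradient coincides with the cubewise classical gradient.

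Finally, we handle the density step. For \(u\in C_c(\mathbb Z^N)\), \(Iu\) is Lipschitz with compact support, hence lies in \(D^{1,p}(\mathbb R^N)\), and the estimate holds. For general \(u\in D^{1,p}(\mathbb Z^N)\), take \(u_n\in C_c\) with \(u_n\to u\) in \(D^{1,p}\). By linearity of \(I\) and the estimate, \(Iu_n\) is Cauchy in \(D^{1,p}(\mathbb R^N)\) and converges to some \(w\). The discrete Sobolev inequality gives \(u_n\to u\) in \(\ell^{p^*}\), hence pointwise, so \(Iu_n\to Iu\) locally uniformly; consequently \(w=Iu\). Passing to the limit in the estimate yields the claim.
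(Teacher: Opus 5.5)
Your proposal is correct and follows essentially the same argument as the paper: you write \(\partial_j Iu\) as a convex combination of the \(2^{N-1}\) edge differences in direction \(j\), apply Jensen, use the power-mean bound for the factor \(N^{\frac p2-1}\), and let the weight integral \(2^{-(N-1)}\) cancel against the edge multiplicity. You sum cube by cube where the paper sums direction by direction, which changes nothing. The one genuine difference is how you get \(Iu\in D^{1,p}(\mathbb R^N)\). The paper uses Jensen to obtain \(\|Iu\|_{L^{p^*}(\mathbb R^N)}\le\|u\|_{\ell^{p^*}(\mathbb Z^N)}\) and combines it with the finite gradient integral, whereas you approximate by finitely supported functions and argue via Cauchy sequences; both routes are valid.
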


\begin{proof}
Fix \(i\in\{1,\ldots,N\}\). For
\(\theta\in[0,1]^N\) and
\(\mu\in\{0,1\}^N\) with \(\mu_i=0\), set
\[
	\lambda_\mu^{(i)}(\theta)
	:=
	\prod_{\substack{1\leq j\leq N\\ j\neq i}}
	\theta_j^{\mu_j}(1-\theta_j)^{1-\mu_j}.
\]
Differentiating with respect to \(\theta_i\), we obtain
\[
	\partial_i Iu(x+\theta)
	=
	\sum_{\substack{\mu\in\{0,1\}^N\\ \mu_i=0}}
	\lambda_\mu^{(i)}(\theta)
	\nabla_{x+\mu,x+\mu+e_i}u,
\]
where
\[
	\lambda_\mu^{(i)}(\theta)\geq0,
	\qquad
	\sum_{\substack{\mu\in\{0,1\}^N\\ \mu_i=0}}
	\lambda_\mu^{(i)}(\theta)=1.
\]
By Jensen's inequality,
\[
	\abs{\partial_i Iu(x+\theta)}^p
	\leq
	\sum_{\substack{\mu\in\{0,1\}^N\\ \mu_i=0}}
	\lambda_\mu^{(i)}(\theta)
	\abs{\nabla_{x+\mu,x+\mu+e_i}u}^p.
\]
Since
\[
	\int_{[0,1]^N}\lambda_\mu^{(i)}(\theta)\,d\theta
	=
	\frac1{2^{N-1}},
\]
we obtain
\[
\begin{aligned}
	\int_{\mathbb R^N}
	\abs{\partial_i Iu(\xi)}^p\,d\xi
	&=
	\sum_{x\in\mathbb Z^N}
	\int_{Q_x}
	\abs{\partial_i Iu(\xi)}^p\,d\xi
	\\
	&\leq
	\frac1{2^{N-1}}
	\sum_{x\in\mathbb Z^N}
	\sum_{\substack{\mu\in\{0,1\}^N\\ \mu_i=0}}
	\abs{\nabla_{x+\mu,x+\mu+e_i}u}^p
	\\
	&=
	\sum_{x\in\mathbb Z^N}
	\abs{\nabla_{x,x+e_i}u}^p.
\end{aligned}
\]
Moreover, since \(p\geq2\), by
\[
	\left(\sum_{i=1}^N a_i\right)^{p/2}
	\leq
	N^{\frac p2-1}
	\sum_{i=1}^N a_i^{p/2},
	\qquad a_i\geq0,
\]
we have
\[
	\abs{\nabla Iu}^p
	=
	\left(
		\sum_{i=1}^N\abs{\partial_i Iu}^2
	\right)^{\frac p2}
	\leq
	N^{\frac p2-1}
	\sum_{i=1}^N\abs{\partial_i Iu}^p.
\]
Therefore,
\[
\begin{aligned}
	\int_{\mathbb R^N}\abs{\nabla Iu(\xi)}^p\,d\xi
	&\leq
	N^{\frac p2-1}
	\sum_{i=1}^N
	\int_{\mathbb R^N}
	\abs{\partial_i Iu(\xi)}^p\,d\xi
	\\
	&\leq
	N^{\frac p2-1}
	\sum_{i=1}^N\sum_{x\in\mathbb Z^N}
	\abs{\nabla_{x,x+e_i}u}^p
	\\
	&=
	\frac{N^{\frac p2-1}}{2}
	\sum_{x\in\mathbb Z^N}\sum_{y\sim x}
	\abs{\nabla_{xy}u}^p.
\end{aligned}
\]
Set
\[
	p^*:=\frac{Np}{N-p}.
\]
By the discrete Sobolev inequality,
\(u\in\ell^{p^*}(\mathbb Z^N)\). For
\(x\in\mathbb Z^N\) and \(\theta\in[0,1]^N\), Jensen's inequality
gives
\[
	\abs{Iu(x+\theta)}^{p^*}
	\leq
	\sum_{\nu\in\{0,1\}^N}
	\lambda_\nu(\theta)
	\abs{u(x+\nu)}^{p^*}.
\]
Integrating over \([0,1]^N\) and summing over
\(x\in\mathbb Z^N\), we obtain
\[
	\norm{Iu}_{L^{p^*}(\mathbb R^N)}^{p^*}
	\leq
	\norm{u}_{\ell^{p^*}(\mathbb Z^N)}^{p^*}
	<\infty.
\]
Thus \(Iu\in L^{p^*}(\mathbb R^N)\), and hence
\(Iu\in D^{1,p}(\mathbb R^N)\).
\end{proof}

If
\[
\liminf_{t\to\infty}\frac{m(t)}
{t^{p_\alpha}}=+\infty,
\]
then the case \(t\to+\infty\) is already excluded in the final minimization
argument. Thus only the case of a finite lower limit requires further
analysis.

The preceding gradient estimate, combined with the continuous Sobolev
inequality, gives a lower bound involving
\(\|Iu_n\|_{L^\alpha(\mathbb R^N)}\). To compare this quantity with
\(t_n=\|u_n\|_{\ell^\alpha(\mathbb Z^N)}\), we prove the following
asymptotic relation.

\begin{lemma}\label{lem:interpolant-norm-asymptotics}
Let \(\alpha>2^*\).
Assume that
\[
\liminf_{t\to\infty}\frac{m(t)}{t^{p_\alpha}}<\infty.
\]
Let \(t_n\to\infty\) satisfy
\[
\frac{m(t_n)}{t_n^{p_\alpha}}
\longrightarrow
\liminf_{t\to\infty}\frac{m(t)}{t^{p_\alpha}},
\]
and let \(u_n\in X\) be such that
\[
	\norm{u_n}_{\ell^\alpha(\mathbb Z^N)}=t_n,
	\qquad
	J(u_n)=m(t_n).
\]
Then
\[
	\frac{\norm{Iu_n}_{L^\alpha(\mathbb R^N)}}{t_n}
	\longrightarrow1
	\qquad\text{as }n\to\infty.
\]
\end{lemma}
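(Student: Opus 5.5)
The plan is to compare \(Iu_n\) with the piecewise constant extension \(Pu_n\), whose \(L^\alpha\)-norm is exactly \(t_n\). Indeed, since \(Pu_n\equiv u_n(x)\) on each unit cube \(Q_x\), we have \(\norm{Pu_n}_{L^\alpha(\mathbb R^N)}=\norm{u_n}_{\ell^\alpha(\mathbb Z^N)}=t_n\). By Minkowski's inequality,
\[
\left|\frac{\norm{Iu_n}_{L^\alpha(\mathbb R^N)}}{t_n}-1\right|
\leq
\frac{\norm{Iu_n-Pu_n}_{L^\alpha(\mathbb R^N)}}{t_n},
\]
so it suffices to show that \(\norm{Iu_n-Pu_n}_{L^\alpha(\mathbb R^N)}=o(t_n)\).

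\textbf{Energy bound.} Since \(m(t_n)/t_n^{p_\alpha}\) converges to a finite limit, there is \(C>0\) with \(J(u_n)=m(t_n)\leq Ct_n^{p_\alpha}\) for all \(n\). The elementary inequality \(\frac12 s^2\leq1-\sqrt{1-s^2}\) used in the coercivity lemma then gives
\[
\frac12\sum_{x\in\mathbb Z^N}\sum_{y\sim x}\abs{\nabla_{xy}u_n}^2\leq 2J(u_n)\leq 2Ct_n^{p_\alpha}.
\]

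\textbf{Local estimate on a cube.} Fix \(x\in\mathbb Z^N\) and \(\theta\in[0,1)^N\). Since \(\sum_\nu\lambda_\nu(\theta)=1\),
\[
Iu(x+\theta)-Pu(x+\theta)=\sum_{\nu\in\{0,1\}^N}\lambda_\nu(\theta)\bigl(u(x+\nu)-u(x)\bigr).
\]
Each difference \(u(x+\nu)-u(x)\) telescopes along a monotone lattice path inside the cube \(x+\{0,1\}^N\). Hence
\[
\abs{Iu-Pu}\leq \sum_{e\in E(x)}\abs{\nabla_e u}\quad\text{on }Q_x,
\]
where \(E(x)\) is the set of the \(N2^{N-1}\) edges of that cube. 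Raising to the power \(\alpha\) and using the convexity bound \((\sum_{j=1}^{M} a_j)^\alpha\leq M^{\alpha-1}\sum_{j=1}^{M} a_j^\alpha\) with \(M=N2^{N-1}\) gives a pointwise bound on \(Q_x\). Integrating over \(Q_x\) (which has unit volume) and summing over \(x\), each edge is counted at most \(2^{N-1}\) times. This yields
\[
\norm{Iu_n-Pu_n}_{L^\alpha(\mathbb R^N)}^\alpha
\leq C_N\sum_{x\in\mathbb Z^N}\sum_{y\sim x}\abs{\nabla_{xy}u_n}^\alpha .
\]

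\textbf{Using the constraint \(u_n\in X\).} This is the key step, and it is where the admissibility condition enters. Since \(\abs{\nabla_{xy}u_n}\leq1\) and \(\alpha>2\), we have \(\abs{\nabla_{xy}u_n}^\alpha\leq\abs{\nabla_{xy}u_n}^2\). Combining with the energy bound,
\[
\norm{Iu_n-Pu_n}_{L^\alpha(\mathbb R^N)}^\alpha\leq C\,t_n^{p_\alpha},
\qquad\text{so}\qquad
\norm{Iu_n-Pu_n}_{L^\alpha(\mathbb R^N)}\leq C\,t_n^{p_\alpha/\alpha}=C\,t_n^{\frac{N}{N+\alpha}}.
\]
Because \(N/(N+\alpha)<1\) and \(t_n\to\infty\), this is \(o(t_n)\), which gives the claimed convergence.

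I expect the only delicate point to be the bookkeeping in the local estimate: controlling \(\abs{Iu-Pu}\) by the edge differences of the cube with a constant depending only on \(N\). The rest follows directly from the finiteness of the \(\liminf\), which converts the Born--Infeld energy into a bound on the Dirichlet energy, and from the gradient constraint \(\abs{\nabla_{xy}u_n}\leq1\), which converts the edge \(\alpha\)-energy into the Dirichlet energy.
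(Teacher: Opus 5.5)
Your proof is correct and follows the paper's strategy. Both compare \(Iu_n\) with the piecewise constant extension \(Pu_n\), use the constraint \(|\nabla_{xy}u_n|\le1\) to bound \(\|Iu_n-Pu_n\|_{L^\alpha}^\alpha\) by a constant times \(J(u_n)=m(t_n)\le Ct_n^{p_\alpha}\), and conclude from \(p_\alpha<\alpha\). The only difference is bookkeeping: you estimate directly in \(L^\alpha\) through the edge \(\alpha\)-energy and \(s^\alpha\le s^2\), whereas the paper estimates in \(L^{p_\alpha}\), combines this with \(\|Iu_n-Pu_n\|_{L^\infty}\le N\) via H\"older, and controls the edge \(p_\alpha\)-energy through \(\kappa_{p_\alpha}\ge\frac12\); both give the same rate \(t_n^{p_\alpha-\alpha}\).
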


\begin{proof}
We compare \(Iu_n\) with the piecewise constant extension \(Pu_n\).
First,
\[
\begin{aligned}
	\norm{Pu_n}_{L^\alpha(\mathbb R^N)}^\alpha
	&=
	\sum_{x\in\mathbb Z^N}
	\int_{Q_x}\abs{Pu_n(\xi)}^\alpha\,d\xi\\
	&=
	\sum_{x\in\mathbb Z^N}\abs{u_n(x)}^\alpha
	=t_n^\alpha,
\end{aligned}
\]
and hence \(\norm{Pu_n}_{L^\alpha(\mathbb R^N)}=t_n\).

For \(x\in\mathbb Z^N\) and \(\theta\in[0,1]^N\),
\[
	Iu_n(x+\theta)-Pu_n(x+\theta)
	=
	\sum_{\nu\in\{0,1\}^N}
	\lambda_\nu(\theta)
	\bigl(u_n(x+\nu)-u_n(x)\bigr).
\]
Jensen's inequality gives
\[
\abs{Iu_n(x+\theta)-Pu_n(x+\theta)}^{p_\alpha}
\leq
\sum_{\nu\in\{0,1\}^N}
\lambda_\nu(\theta)
\abs{u_n(x+\nu)-u_n(x)}^{p_\alpha}.
\]
Notice that \(x+\nu\) need not be adjacent to \(x\). We therefore
connect \(x\) to \(x+\nu\) through the vertices
\(z_0,z_1,\ldots,z_N\) defined as follows.
For \(\nu\in\{0,1\}^N\), set
\[
	z_0:=x,
	\qquad
	z_k:=x+\sum_{j=1}^k\nu_j e_j,
	\quad k=1,\ldots,N.
\]
Then \(z_N=x+\nu\) and
\[
	u_n(x+\nu)-u_n(x)
	=
	\sum_{k=1}^N
	\nu_k
	\nabla_{z_{k-1},\,z_{k-1}+e_k}u_n.
\]
Using
\[
\left|\sum_{k=1}^N a_k\right|^{p_\alpha}
\leq
N^{p_\alpha-1}
\sum_{k=1}^N|a_k|^{p_\alpha},
\]
we obtain
\[
\abs{u_n(x+\nu)-u_n(x)}^{p_\alpha}
\leq
N^{p_\alpha-1}
\sum_{k=1}^N
\nu_k
\abs{\nabla_{z_{k-1},\,z_{k-1}+e_k}u_n}^{p_\alpha}.
\]
Notice that, for fixed \(k\) and \(\nu\) with \(\nu_k=1\),
\[
\begin{aligned}
	\sum_{x\in\mathbb Z^N}
	\abs{\nabla_{z_{k-1},\,z_{k-1}+e_k}u_n}^{p_\alpha}
	&=
	\sum_{x\in\mathbb Z^N}
	\abs{
		\nabla_{
			x+\sum_{j=1}^{k-1}\nu_j e_j,\,
			x+\sum_{j=1}^{k-1}\nu_j e_j+e_k
		}
		u_n
	}^{p_\alpha}
	\\
	&=
	\sum_{x\in\mathbb Z^N}
	\abs{\nabla_{x,x+e_k}u_n}^{p_\alpha}.
\end{aligned}
\]
Since
\[
	\int_{[0,1]^N}\lambda_\nu(\theta)\,d\theta=2^{-N},
\]
and exactly \(2^{N-1}\) vectors
\(\nu\in\{0,1\}^N\) satisfy \(\nu_k=1\), summing over
\(\nu\in\{0,1\}^N\) gives the factor \(2^{N-1}\).
Moreover, notice that
\[
\sum_{x\in\mathbb Z^N}\sum_{y\sim x}
|\nabla_{xy}u_n|^{p_\alpha}
=
2\sum_{i=1}^N\sum_{x\in\mathbb Z^N}
|\nabla_{x,x+e_i}u_n|^{p_\alpha}.
\]
Hence, we obtain
\[
\begin{aligned} 
	\norm{Iu_n-Pu_n}_{L^{p_\alpha}(\mathbb R^N)}^{p_\alpha}
	&\leq 
	\frac{N^{p_\alpha-1}}{2}
	\sum_{i=1}^N\sum_{x\in\mathbb Z^N} 
	\abs{\nabla_{x,x+e_i}u_n}^{p_\alpha}\\
	&= 
	\frac{N^{p_\alpha-1}}{4}
	\sum_{x\in\mathbb Z^N}\sum_{y\sim x} 
	\abs{\nabla_{xy}u_n}^{p_\alpha}.
\end{aligned} 
\]

Since \(u_n\in X\), for every \(\nu\in\{0,1\}^N\),
\[
\begin{aligned}
	\abs{u_n(x+\nu)-u_n(x)}
	&\leq
	\sum_{k=1}^N
	\nu_k
	\abs{\nabla_{z_{k-1},\,z_{k-1}+e_k}u_n}
	\\
	&\leq N.
\end{aligned}
\]
Therefore,
\[
\begin{aligned}
	\abs{Iu_n(x+\theta)-Pu_n(x+\theta)}
	&\leq
	\sum_{\nu\in\{0,1\}^N}
	\lambda_\nu(\theta)
	\abs{u_n(x+\nu)-u_n(x)}
	\\
	&\leq N,
\end{aligned}
\]
and hence
\[
	\norm{Iu_n-Pu_n}_{L^\infty(\mathbb R^N)}\leq N.
\]
By H\"older's inequality,
\[
\begin{aligned} 
	\norm{Iu_n-Pu_n}_{L^\alpha(\mathbb R^N)}^\alpha 
	&\leq 
	\norm{Iu_n-Pu_n}_{L^\infty(\mathbb R^N)}^{\alpha-p_\alpha}
	\norm{Iu_n-Pu_n}_{L^{p_\alpha}(\mathbb R^N)}^{p_\alpha}\\
	&\leq 
	\frac{N^{\alpha-1}}{4} 
	\sum_{x\in\mathbb Z^N}\sum_{y\sim x} 
	\abs{\nabla_{xy}u_n}^{p_\alpha}
	\leq 
	N^{\alpha-1}m(t_n), 
\end{aligned} 
\]
where in the last inequality we used
\(\kappa_{p_\alpha}\geq\frac12\), which follows from
\(1-\sqrt{1-s}\geq s/2\) and \(s^{p_\alpha/2}\leq s\) for
\(s\in[0,1]\) and \(p_\alpha\geq2\). In particular,
\(Iu_n-Pu_n\in L^\alpha(\mathbb R^N)\), and hence
\(Iu_n\in L^\alpha(\mathbb R^N)\).
By the choice of \(t_n\), there exists \(C>0\) such that
\[
m(t_n)\leq Ct_n^{p_\alpha}
\]
for all sufficiently large \(n\). Therefore, since \(p_\alpha<\alpha\),
\[
\left(
\frac{\norm{Iu_n-Pu_n}_{L^\alpha(\mathbb R^N)}}{t_n}
\right)^\alpha
\leq
CN^{\alpha-1}t_n^{p_\alpha-\alpha}
\longrightarrow0
\qquad\text{as }n\to\infty.
\]
Hence,
\[
	\left|
		\frac{\norm{Iu_n}_{L^\alpha(\mathbb R^N)}}{t_n}-1
	\right|
	\leq
	\frac{\norm{Iu_n-Pu_n}_{L^\alpha(\mathbb R^N)}}{t_n}
	\longrightarrow0
	\qquad\text{as }n\to\infty.
\]
This proves the desired asymptotic relation.
\end{proof}

\subsection{Proof of Theorem~\ref{achieved}}
To exclude minimizing sequences with \(t\to\infty\), we need to prove
that
\[
C_{N,\alpha}
<
\liminf_{t\to\infty}
\frac{m(t)}{t^{p_\alpha}}.
\]
The underlying mechanism is a separation between the discrete critical
Sobolev level and the continuous Sobolev level that appears in the limit
\(t\to\infty\). The strict comparison \(\mathcal S_2<S_2^{\mathbb R^N}\) was recently
established in \cite{HeJi2026}, but this result is not used in the proof
below. Instead, we obtain the required separation directly and
quantitatively: a discrete test function yields, for \(\alpha>2^*\)
sufficiently close to \(2^*\),
\[
C_{N,\alpha}<\frac{2N-1}{2},
\]
whereas the \(Q_1\) comparison with the Euclidean Sobolev inequality
gives
\[
\liminf_{t\to\infty}\frac{m(t)}{t^{p_\alpha}}
\geq \frac12S_2^{\mathbb R^N}-\eta.
\]
Since \(S_2^{\mathbb R^N}>2N-1\), choosing \(\eta>0\) sufficiently small
produces the desired strict separation. The next three lemmas make
this argument precise.

\begin{lemma}\label{lem:strict-upper-bound-near-critical}
Let \(N\geq3\). Then there exists \(\delta>0\) such that
\[
	C_{N,\alpha}<\frac{2N-1}{2}
\]
for every
\[
	2^*<\alpha<2^*+\delta.
\]
\end{lemma}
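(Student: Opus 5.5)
The plan is to produce one explicit, finitely supported test function whose Born--Infeld quotient lies strictly below \(\frac{2N-1}{2}\) at \(\alpha=2^*\). I then transfer the strict inequality to \(\alpha\) slightly larger than \(2^*\) by continuity in \(\alpha\), with the test function held fixed. Since \(C_{N,\alpha}\) is an infimum, one good function suffices.

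\emph{Step 1: the test function.} Let \(u(0)=1\), let \(u(\pm e_i)=b\) for \(i=1,\dots,N\), and let \(u=0\) elsewhere, where \(b=\frac1{2N}\). The undirected edges carrying a nonzero difference are:
\begin{itemize}
\item the \(2N\) edges joining \(0\) to its neighbours, each with difference \(1-b\);
\item the \(2N(2N-1)\) edges joining a neighbour of \(0\) to a vertex at distance \(2\) from \(0\), each with difference \(b\).
\end{itemize}
No two neighbours of \(0\) are adjacent, so there are no further edges. Hence
\[
\|u\|_{D^{1,2}(\mathbb Z^N)}^2=2N\bigl[(1-b)^2+(2N-1)b^2\bigr]=2N\Bigl[1-2b+2Nb^2\Bigr]=2N-1 .
\]
The value \(b=\frac1{2N}\) is exactly the minimizer of this quadratic in \(b\). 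On the other hand,
\[
\|u\|_{\ell^{2^*}}^{2}=\bigl(1+2Nb^{2^*}\bigr)^{2/2^*}>1 .
\]
Therefore \(\|u\|_{D^{1,2}}^2/\|u\|_{\ell^{2^*}}^2<2N-1\).

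\emph{Step 2: the quotient at \(\alpha=2^*\).} For \(\varepsilon>0\) small, \(\varepsilon u\in X\). The computation in the proof of Theorem~\ref{thm:main}, which here involves only a finite sum, gives \(\varepsilon^{-2}J(\varepsilon u)\to\frac12\|u\|_{D^{1,2}}^2\) as \(\varepsilon\to0\). So I can fix \(\varepsilon_1\in(0,1]\) such that
\[
Q(2^*):=\frac{J(\varepsilon_1u)}{\|\varepsilon_1u\|_{\ell^{2^*}}^{p_{2^*}}}<\frac{2N-1}{2},
\]
where \(p_{2^*}=2\). Equivalently, this also follows from \(C_{N,2^*}=\frac12\mathcal S_2\) and Step 1, but the fixed function \(\varepsilon_1u\) is what Step 3 needs.

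\emph{Step 3: continuity in \(\alpha\).} For the fixed finitely supported \(v=\varepsilon_1u\in X\), the map
\[
\alpha\mapsto Q(\alpha):=\frac{J(v)}{\|v\|_{\ell^\alpha}^{p_\alpha}}
\]
is continuous on \((2,\infty)\). Indeed, \(J(v)\) does not depend on \(\alpha\); \(\|v\|_{\ell^\alpha}^\alpha\) is a finite sum of terms \(|v(x)|^\alpha\) with \(v(x)>0\); and \(p_\alpha=\frac{N\alpha}{N+\alpha}\) is continuous. Hence there is \(\delta>0\) with \(Q(\alpha)<\frac{2N-1}{2}\) for all \(2^*<\alpha<2^*+\delta\). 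Since \(v\in X\cap\ell^\alpha\setminus\{0\}\), this gives \(C_{N,\alpha}\le Q(\alpha)<\frac{2N-1}{2}\) on that range.

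The only real obstacle is Step 1: finding a test function that beats \(2N-1\) strictly. The single spike \(\mathbf 1_{\{0\}}\) only yields \(2N\), and even its small-amplitude limit only gives \(N\) times a factor that is not good enough. Spreading a little mass \(b\) onto the \(2N\) neighbours lowers the Dirichlet energy to exactly \(2N-1\), and the strict gain comes from the \(\ell^{2^*}\) norm increasing. Steps 2 and 3 are routine limits.
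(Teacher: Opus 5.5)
Your proof is correct and uses the paper's test function: value $1$ at the origin and $\frac{1}{2N}$ at its $2N$ neighbours. The mechanism is also the same: at small amplitude, $J$ reduces to $\frac12\|v\|_{D^{1,2}}^2=\frac{2N-1}{2}$, while $\|v\|_{\ell^{2^*}}>1$ supplies the strict gain.

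The only difference is in how the limit is organised. The paper ties the amplitude to the exponent through $u_p=(p-2)v$ and takes a single limit $p\to2^+$, which needs $(p-2)^{2-p}\to1$. You instead fix a small amplitude at $\alpha=2^*$ and then use continuity of $\alpha\mapsto J(v)/\|v\|_{\ell^\alpha}^{p_\alpha}$, which is slightly cleaner.
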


\begin{proof}
For \(2<p<N\), set
\(\alpha(p):=\frac{Np}{N-p}\) and
\(u_p(x):=(p-2)v(x)\), where
\[
	v(x)
	:=
	\begin{cases}
		1, & x=0,\\[1mm]
		\dfrac{1}{2N}, & x=\pm e_i,\quad i=1,\ldots,N,\\[2mm]
		0, & \text{otherwise}.
	\end{cases}
\]
For \(p>2\) sufficiently close to \(2\), we have \(u_p\in X\), since
\[
	\max_{x\sim y}\abs{\nabla_{xy}u_p}
	=
	\frac{(p-2)(2N-1)}{2N}
	\leq1.
\]

By
\(\frac{N\alpha(p)}{N+\alpha(p)}=p\)
and the definition of \(C_{N,\alpha(p)}\), we have
\[
	C_{N,\alpha(p)}
	\leq
	\frac{J(u_p)}
	{\norm{u_p}_{\ell^{\alpha(p)}(\mathbb Z^N)}^p}.
\]
There are \(2N\) unoriented center--neighbor edges, each with
edge difference \((p-2)(2N-1)/(2N)\), and
\(2N(2N-1)\) unoriented neighbor--exterior edges, each with edge difference
\((p-2)/(2N)\). Since the factor \(1/2\) in \(J\) removes the double
counting of ordered edges, we obtain
\[
\begin{aligned}
	J(u_p)
	={}&
	2N\left(
		1-\sqrt{
			1-\frac{(p-2)^2(2N-1)^2}{4N^2}
		}
	\right)
	\\
	&+
	2N(2N-1)\left(
		1-\sqrt{
			1-\frac{(p-2)^2}{4N^2}
		}
	\right).
\end{aligned}
\]
Set
\[
	A_p
	:=
	2N
	\frac{
	1-\sqrt{
	1-\frac{(p-2)^2(2N-1)^2}{4N^2}
	}
	}{
	(p-2)^2
	}
	+
	2N(2N-1)
	\frac{
	1-\sqrt{
	1-\frac{(p-2)^2}{4N^2}
	}
	}{
	(p-2)^2
	}.
\]
Then
\[
	J(u_p)=(p-2)^2A_p,
\]
while
\[
	\norm{u_p}_{\ell^{\alpha(p)}(\mathbb Z^N)}^p
	=
	(p-2)^p
	\left[
		1+(2N)^{1-\frac{Np}{N-p}}
	\right]^{\frac{N-p}{N}}.
\]
Consequently,
\[
	C_{N,\alpha(p)}
	\leq
	\frac{
	(p-2)^{2-p}A_p
	}{
	\left[
	1+(2N)^{1-\frac{Np}{N-p}}
	\right]^{\frac{N-p}{N}}
	}.
\]

Using
\[
	1-\sqrt{1-s}\sim\frac{s}{2}
	\qquad\text{as }s\to0^+,
\]
and
\[
	(p-2)^{2-p}\longrightarrow1
	\qquad\text{as }p\to2^+,
\]
we obtain
\[
	\limsup_{p\to2^+}C_{N,\alpha(p)}
	\leq
	\frac{2N-1}
	{2\left[
			1+(2N)^{1-2^*}
		\right]^{\frac{N-2}{N}}
	}
	<\frac{2N-1}{2}.
\]
By the preceding limit, there exists \(\varepsilon\in(0,N-2)\) such that
\[
	C_{N,\alpha(p)}<\frac{2N-1}{2},
	\qquad 2<p<2+\varepsilon.
\]
Since \(\alpha(p)=\frac{Np}{N-p}\) is continuous and strictly
increasing, setting
\[
	\delta:=\alpha(2+\varepsilon)-2^*>0,
\]
we obtain
\[
	C_{N,\alpha}<\frac{2N-1}{2},
	\qquad 2^*<\alpha<2^*+\delta.
\]
\end{proof}
For later use, let \(S_p^{\mathbb R^N}\) denote the optimal constant in the Sobolev inequality on
	\(\mathbb R^N\).
\begin{lemma}\label{lem:lower-bound-at-infinity}
Let \(N\geq3\). For every \(\eta>0\), there exists
\(\varepsilon>0\) such that, for every \(\alpha\in(2^*,2^*+\varepsilon)\),
\[
\liminf_{t\to\infty}
\frac{m(t)}
{t^{p_\alpha}}
\geq \frac12 S_2^{\mathbb R^N}-\eta.
\]
\end{lemma}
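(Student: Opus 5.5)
Fix \(\alpha>2^*\) and write \(p:=p_\alpha\), so that \(p>2\), \(p^*=\alpha\), and \(p\downarrow2\) as \(\alpha\downarrow2^*\). The plan is to bound \(m(t)\) from below directly through the continuous Sobolev inequality applied to the \(Q_1\) interpolant. We then pass to the limit \(t\to\infty\) using Lemma~\ref{lem:interpolant-norm-asymptotics}, and finally let \(p\to2\).

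If \(\liminf_{t\to\infty}m(t)/t^{p}=+\infty\), there is nothing to prove. Otherwise, choose \(t_n\to\infty\) realizing the lower limit, and let \(u_n\in X\) be minimizers of \(m(t_n)\), which exist by Proposition~\ref{prop:constrained-minimizer}.

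\textbf{Step 1: lower bound for \(J(u_n)\).} As in the proof of Lemma~\ref{lem:estimate}, \(u_n\in D^{1,p}(\mathbb Z^N)\), and by \eqref{eq:kappa-lower-bound}
\[
m(t_n)=J(u_n)\geq \kappa_p\,\frac12\sum_{x\in\mathbb Z^N}\sum_{y\sim x}|\nabla_{xy}u_n|^p .
\]

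\textbf{Step 2: pass to the continuum.} Lemma~\ref{lem:Q1-gradient-estimate} gives \(Iu_n\in D^{1,p}(\mathbb R^N)\) and
\[
\frac12\sum_{x\in\mathbb Z^N}\sum_{y\sim x}|\nabla_{xy}u_n|^p\geq N^{1-\frac p2}\int_{\mathbb R^N}|\nabla Iu_n|^p\,d\xi .
\]
The Euclidean Sobolev inequality, in the normalization \(\int_{\mathbb R^N}|\nabla w|^p\geq S_p^{\mathbb R^N}\|w\|_{L^{p^*}(\mathbb R^N)}^p\) with \(p^*=\alpha\), then yields
\[
\frac{m(t_n)}{t_n^{p}}\geq \kappa_p N^{1-\frac p2}S_p^{\mathbb R^N}\left(\frac{\|Iu_n\|_{L^\alpha(\mathbb R^N)}}{t_n}\right)^{p}.
\]
By Lemma~\ref{lem:interpolant-norm-asymptotics}, the last factor tends to \(1\). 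Hence
\[
\liminf_{t\to\infty}\frac{m(t)}{t^{p_\alpha}}\geq \kappa_{p_\alpha}N^{1-\frac{p_\alpha}{2}}S_{p_\alpha}^{\mathbb R^N}=:L(p_\alpha).
\]

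\textbf{Step 3: let \(p\to2\).} As \(p\to2^+\) we have \(\kappa_p\to\frac12\) (recorded in Section~\ref{2}) and \(N^{1-p/2}\to1\). It therefore suffices to show \(\liminf_{p\to2^+}S_p^{\mathbb R^N}\geq S_2^{\mathbb R^N}\), which gives \(L(p)\geq\frac12S_2^{\mathbb R^N}-\eta\) for \(p\in(2,2+\varepsilon')\). Since \(p_\alpha\) is continuous and increasing in \(\alpha\) with \(p_{2^*}=2\), this translates into \(\alpha\in(2^*,2^*+\varepsilon)\).

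\textbf{Main obstacle.} The only nontrivial point is the continuity of the sharp Sobolev constant at \(p=2\). I would obtain it from Talenti's explicit formula
\[
S_p^{\mathbb R^N}=\pi^{p/2}N\Bigl(\frac{N-p}{p-1}\Bigr)^{p-1}\Bigl(\frac{\Gamma(N/p)\Gamma(1+N-N/p)}{\Gamma(1+N/2)\Gamma(N)}\Bigr)^{p/N},
\]
which is manifestly continuous in \(p\in(1,N)\). This must be checked against the normalization \(\int|\nabla w|^p\geq S\|w\|_{p^*}^p\) used above.

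If one prefers to avoid the formula, there is an alternative lower-semicontinuity route, which is all that is needed. Take \(p_k\to2\) and near-optimizers \(w_k\), reduced to radial decreasing functions with \(\|w_k\|_{p_k^*}=1\). Then apply Hölder and a compactness argument for radial functions to pass to the limit in the quotient. I expect the explicit formula to be the cleaner path, and I would use it.
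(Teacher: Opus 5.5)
Your proposal is correct and follows the paper's proof essentially step by step. The steps are the same: the \(\kappa_{p_\alpha}\) lower bound, the \(Q_1\) gradient estimate of Lemma~\ref{lem:Q1-gradient-estimate} combined with the Euclidean Sobolev inequality, Lemma~\ref{lem:interpolant-norm-asymptotics} to replace \(\|Iu_n\|_{L^\alpha(\mathbb R^N)}\) by \(t_n\), and finally \(\kappa_p\to\frac12\) together with the continuity of \(S_p^{\mathbb R^N}\) at \(p=2\) via Talenti's explicit formula. Your expression for that formula matches the normalization you use and reduces at \(p=2\) to the one quoted in Lemma~\ref{lem:S2-lower-bound}.
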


\begin{proof}
Fix \(\eta>0\) and \(\alpha>2^*\).

If \(\liminf_{t\to\infty}m(t)/t^{p_\alpha}=+\infty\), the conclusion is immediate. Otherwise, choose \(t_n\to\infty\) such
that
\[
\frac{m(t_n)}{t_n^{p_\alpha}}
\longrightarrow
\liminf_{t\to\infty}\frac{m(t)}{t^{p_\alpha}},
\]
and let \(u_n\in X\) satisfy
\[
	\norm{u_n}_{\ell^\alpha(\mathbb Z^N)}=t_n,
	\qquad
	J(u_n)=m(t_n).
\]

Because \(u_n\in X\), we have \(|\nabla_{xy}u_n|\leq1\). Since \(p_\alpha>2\),
\[
\sum_{x\in\mathbb Z^N}\sum_{y\sim x}|\nabla_{xy}u_n|^{p_\alpha}
\leq
\sum_{x\in\mathbb Z^N}\sum_{y\sim x}|\nabla_{xy}u_n|^2<\infty.
\]
Moreover \(u_n\in\ell^\alpha(\mathbb Z^N)\) and \(\alpha=p_\alpha^*\). Hence
Lemma~\ref{lem:finite-energy-characterization} gives
\(u_n\in D^{1,p_\alpha}(\mathbb Z^N)\), so
Lemma~\ref{lem:Q1-gradient-estimate} is applicable.

By \eqref{eq:kappa-lower-bound} and
Lemma~\ref{lem:Q1-gradient-estimate},
\[
\begin{aligned}
	\frac{m(t_n)}{t_n^{p_\alpha}}
	&\geq
	\frac{\kappa_{p_\alpha}}{2t_n^{p_\alpha}}
	\sum_{x\in\mathbb Z^N}\sum_{y\sim x}
	\abs{\nabla_{xy}u_n}^{p_\alpha}
	\\
	&\geq
	\frac{\kappa_{p_\alpha}}{N^{\frac{p_\alpha}2-1}}
	\frac{
		\displaystyle
		\int_{\mathbb R^N}\abs{\nabla Iu_n}^{p_\alpha}\,dx
	}{
		t_n^{p_\alpha}
	}
	\\
	&\geq
	\frac{\kappa_{p_\alpha}
		S_{p_\alpha}^{\mathbb R^N}}
	{N^{\frac{p_\alpha}2-1}}
	\left(
	\frac{
		\norm{Iu_n}_{L^\alpha(\mathbb R^N)}
	}{
		t_n
	}
	\right)^{p_\alpha}.
\end{aligned}
\]
Lemma~\ref{lem:interpolant-norm-asymptotics} gives
\[
	\frac{\norm{Iu_n}_{L^\alpha(\mathbb R^N)}}{t_n}
	\longrightarrow1
	\qquad\text{as }n\to\infty,
\]
and therefore
\[
\liminf_{t\to\infty}\frac{m(t)}{t^{p_\alpha}}
\geq
\frac{\kappa_{p_\alpha}}
{N^{\frac{p_\alpha}2-1}}
S_{p_\alpha}^{\mathbb R^N}.
\]

As \(\alpha\to(2^*)^+\), we have \(p_\alpha\to2^+\).
By the formula for \(\kappa_p\) recorded in Section~\ref{2}
and the explicit formula for \(S_{p_\alpha}^{\mathbb R^N}\) in
\cite{Talenti1976}, we have
\(\kappa_{p_\alpha}\to\frac12\) and
\(S_{p_\alpha}^{\mathbb R^N}\to S_2^{\mathbb R^N}\)
as \(\alpha\to(2^*)^+\), and hence
\[
\frac{\kappa_{p_\alpha}}
{N^{\frac{p_\alpha}2-1}}
S_{p_\alpha}^{\mathbb R^N}
\longrightarrow \frac12 S_2^{\mathbb R^N}.
\]
Thus there exists \(\varepsilon>0\) such that, for every
\(\alpha\in(2^*,2^*+\varepsilon)\),
\[
\liminf_{t\to\infty}
\frac{m(t)}
{t^{p_\alpha}}
\geq \frac12 S_2^{\mathbb R^N}-\eta.
\]
\end{proof}

\begin{lemma}\label{lem:S2-lower-bound}
For every \(N\geq3\), \(S_2^{\mathbb R^N}>2N-1\).
\end{lemma}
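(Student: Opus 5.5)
The plan is to use Talenti's explicit value of the sharp Sobolev constant (already invoked in the proof of Lemma~\ref{lem:lower-bound-at-infinity}) and to reduce the claim to an elementary estimate on a ratio of Gamma functions. With the normalization $S_2^{\mathbb R^N}=\inf\int_{\mathbb R^N}|\nabla u|^2\,d\xi/\|u\|_{L^{2^*}(\mathbb R^N)}^2$, the constant is
\[
S_2^{\mathbb R^N}=\pi N(N-2)\left(\frac{\Gamma(N/2)}{\Gamma(N)}\right)^{2/N}.
\]
Equivalently, $S_2^{\mathbb R^N}=\frac{N(N-2)}{4}|\mathbb S^N|^{2/N}$. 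I will work with the Gamma form.

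The key step is the bound
\[
\frac{\Gamma(N)}{\Gamma(N/2)}\leq N^{N/2}.
\]
To prove it, I use the log-convexity of $\Gamma$ on $(0,\infty)$. Convexity gives $\log\Gamma(N)-\log\Gamma(N/2)\leq\frac N2\,\psi(N)$, where $\psi=\Gamma'/\Gamma$ is the digamma function. I then combine this with the standard inequality $\psi(x)<\log x$ for $x>0$. This yields $\left(\Gamma(N/2)/\Gamma(N)\right)^{2/N}>1/N$, and hence
\[
S_2^{\mathbb R^N}>\pi(N-2).
\]
For even $N$ the bound is also immediate from the identity $\Gamma(N)/\Gamma(N/2)=\prod_{k=N/2}^{N-1}k$, a product of $N/2$ factors each smaller than $N$. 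Now $\pi(N-2)>2N-1$ is equivalent to $(\pi-2)N>2\pi-1$, that is, $N>(2\pi-1)/(\pi-2)\approx4.63$. So the claim follows for every $N\geq5$.

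The remaining cases $N=3,4$ I will check directly from the formula. For $N=3$,
\[
S_2^{\mathbb R^3}=3\pi\left(\tfrac{\sqrt\pi}{4}\right)^{2/3}\approx 9.42\cdot0.581\approx5.48>5.
\]
For $N=4$,
\[
S_2^{\mathbb R^4}=8\pi\,6^{-1/2}\approx10.26>7.
\]
In both cases a crude rational bound on $\pi$ suffices to make the check rigorous: for example $\pi>3$ together with $(\pi/16)^{1/3}>0.58$ for $N=3$.

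The only delicate point I expect is the general Gamma-ratio bound for odd $N$. If one prefers to avoid the digamma inequality there, I would instead use $\Gamma(N)/\Gamma(N/2)\leq\Gamma(N+\tfrac12)/\Gamma(\tfrac{N+1}{2})$, which follows from monotonicity of $x\mapsto\Gamma(x+a)/\Gamma(x)$, and then bound the resulting product termwise as in the even case. Everything else is routine arithmetic, with some care needed to match the normalization of $S_2^{\mathbb R^N}$ used in Lemma~\ref{lem:lower-bound-at-infinity}.
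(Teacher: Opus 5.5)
Your proof is correct and has the same skeleton as the paper's: Talenti's formula, the bound $\Gamma(N)/\Gamma(N/2)<N^{N/2}$, hence $S_2^{\mathbb R^N}>\pi(N-2)>2N-1$ for $N\geq5$, and direct evaluation for $N=3,4$. Your values $3\pi(\sqrt\pi/4)^{2/3}=3\pi^{4/3}/2^{4/3}$ and $8\pi/\sqrt6$ coincide with the paper's.

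The only real difference is how you get the Gamma-ratio bound. The paper treats even $N$ by the finite product, exactly as in your remark. For odd $N=2m+1$ it uses the duplication formula $\Gamma(2m+1)/\Gamma(m+\frac12)=4^m m!/\sqrt\pi$, then AM--GM on $m!$, then the estimate $(1+\frac1{2m+1})^m<\sqrt e<\sqrt{\pi(2m+1)}$. You instead apply the tangent-line inequality for the convex function $\log\Gamma$ at $N$, together with $\psi(N)<\log N$. This handles every $N$ at once (indeed every real $N>0$) and is shorter; the paper's version needs only duplication and AM--GM. The digamma estimate costs you nothing extra. By the mean value theorem, $\log N=\log\Gamma(N+1)-\log\Gamma(N)=\psi(\xi)$ for some $\xi\in(N,N+1)$. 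Since $\psi$ is strictly increasing, $\psi(N)<\log N$ follows from log-convexity alone.

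Two small corrections. First, your fallback for odd $N$ does not work as described. After the shift, the arguments of $\Gamma(N+\frac12)/\Gamma(\frac{N+1}{2})$ differ by $N/2\notin\mathbb Z$, so it is not a finite product you can bound termwise. The main route makes the fallback unnecessary, so just drop it. Second, for $N=3$ the inequality $(\pi/16)^{1/3}>0.58$ needs $\pi>16\cdot 0.58^3\approx3.122$, not merely $\pi>3$. Either use $\pi>3.13$, or use $\pi>3$ in both factors: this gives $S_2^{\mathbb R^3}>9(3/16)^{1/3}$, which exceeds $5$ because $9^3\cdot3/16=2187/16>125$.
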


\begin{proof}
By the explicit formula for \(S_p^{\mathbb R^N}\) in \cite{Talenti1976} with \(p=2\),
we have
\[	S_2^{\mathbb R^N}
=
N(N-2)\pi
\left(
\frac{\Gamma(N/2)}{\Gamma(N)}
\right)^{2/N}.\]
For \(N=2m\),
\[
	\frac{\Gamma(2m)}{\Gamma(m)}
	=
	m(m+1)\cdots(2m-1)
	<
	(2m)^m.
\]
For \(N=2m+1\), we use
\(m!\leq((m+1)/2)^m\), which follows from the arithmetic--geometric
mean inequality applied to \(1,\ldots,m\). Thus
\[
\begin{aligned}
	\frac{\Gamma(2m+1)}{\Gamma(m+1/2)}
	&=
	\frac{4^m m!}{\sqrt{\pi}}
	\leq
	\frac{[2(m+1)]^m}{\sqrt{\pi}}\\
	&=
	\frac{(2m+1)^m}{\sqrt{\pi}}
	\left(1+\frac{1}{2m+1}\right)^m
	<
	(2m+1)^{m+1/2}.
\end{aligned}
\]
Here we used
\[
	\left(1+\frac{1}{2m+1}\right)^m
	<
	e^{\frac{m}{2m+1}}
	<
	\sqrt e
	<
	\sqrt{\pi(2m+1)}.
\]
Thus
\[
	\frac{\Gamma(N)}{\Gamma(N/2)}<N^{N/2},
\]
and hence
\[
	S_2^{\mathbb R^N}>\pi(N-2).
\]
For \(N\geq5\),
\[
	S_2^{\mathbb R^N}>\pi(N-2)>3(N-2)\geq2N-1.
\]
For \(N=3,4\),
\[
	S_2^{\mathbb R^N}\big|_{N=3}
	=
	\frac{3\pi^{4/3}}{2^{4/3}}>5,
	\qquad
	S_2^{\mathbb R^N}\big|_{N=4}
	=
	\frac{8\pi}{\sqrt6}>7.
\]
\end{proof}

We are now in a position to complete the proof.

\begin{proof}
By Lemma~\ref{lem:S2-lower-bound}, choose \(\eta>0\) such that
\[
	\frac{2N-1}{2}<\frac12S_2^{\mathbb R^N}-\eta.
\]
By Lemma~\ref{lem:strict-upper-bound-near-critical}, there exists
\(\varepsilon_1>0\) such that
\[
	C_{N,\alpha}<\frac{2N-1}{2}
\]
for every \(2^*<\alpha<2^*+\varepsilon_1\). By
Lemma~\ref{lem:lower-bound-at-infinity}, there exists
\(\varepsilon_2>0\) such that
\[
\frac12S_2^{\mathbb R^N}-\eta
\leq
\liminf_{t\to\infty}
\frac{m(t)}
{t^{p_\alpha}}
\]
for every \(2^*<\alpha<2^*+\varepsilon_2\). Set
\[
	\varepsilon_0
	:=
	\min\{\varepsilon_1,\varepsilon_2\}.
\]
Then, for every \(2^*<\alpha<2^*+\varepsilon_0\),
\[
C_{N,\alpha}
<
\frac{2N-1}{2}
<
\frac12S_2^{\mathbb R^N}-\eta
\leq
\liminf_{t\to\infty}
\frac{m(t)}{t^{p_\alpha}}.
\]
Fix such an \(\alpha\). The preceding strict separation excludes minimizing sequences with
	\(t\to\infty\). To exclude \(t\to0^+\), we use the discrete
Sobolev inequality. For every \(u\in X\),
\eqref{eq:discrete-sobolev-inequality} gives
\[
	J(u)
	\geq
	\frac12\norm{u}_{D^{1,2}(\mathbb Z^N)}^2
	\geq
	c_N\norm{u}_{\ell^{2^*}(\mathbb Z^N)}^2
	\geq
	c_N\norm{u}_{\ell^\alpha(\mathbb Z^N)}^2,
\]
where the last inequality follows from
\(\ell^{2^*}(\mathbb Z^N)\subset\ell^\alpha(\mathbb Z^N)\) for
\(\alpha>2^*\).
Thus \(m(t)\geq c_Nt^2\), and
\begin{equation}\label{eq:mt-small-t}
	\frac{m(t)}{t^{p_\alpha}}
	\geq
	c_Nt^{2-p_\alpha}
	\longrightarrow+\infty
	\qquad\text{as }t\to0^+.
\end{equation}

By Proposition~\ref{prop:quotient-constrained-relation},
\[
C_{N,\alpha}
=
\inf_{t>0}\frac{m(t)}{t^{p_\alpha}}.
\]
We now show that this infimum is attained. Let
\(\{t_n\}\subset(0,\infty)\) satisfy
\[
\frac{m(t_n)}{t_n^{p_\alpha}}
\longrightarrow C_{N,\alpha}
\qquad\text{as }n\to\infty.
\]
By \eqref{eq:mt-small-t} and
\[
C_{N,\alpha}
<
\liminf_{t\to\infty}\frac{m(t)}
{t^{p_\alpha}},
\]
there exist constants \(c,C>0\) such that
\[
	c\leq t_n\leq C
\]
for all sufficiently large \(n\). Hence, passing to a subsequence if
necessary,
\[
	t_n\longrightarrow t_0
	\qquad\text{for some }t_0\in(0,\infty).
\]
By Lemma~\ref{lem:mt-lower-semicontinuity},
\[
\frac{m(t_0)}{t_0^{p_\alpha}}
\leq
\liminf_{n\to\infty}
\frac{m(t_n)}{t_n^{p_\alpha}}
=
C_{N,\alpha}.
\]
By Proposition~\ref{prop:quotient-constrained-relation},
\[
C_{N,\alpha}
\leq
\frac{m(t_0)}{t_0^{p_\alpha}}.
\]
Hence
\[
C_{N,\alpha}
=
\frac{m(t_0)}{t_0^{p_\alpha}}.
\]
By Proposition~\ref{prop:constrained-minimizer}, there exists a Schwarz
symmetric function \(u_0\in X\) such that
\[
	\norm{u_0}_{\ell^\alpha(\mathbb Z^N)}=t_0
	\qquad\text{and}\qquad
	J(u_0)=m(t_0).
\]
Therefore,
\[
C_{N,\alpha}
=
\frac{
	J(u_0)
}{
	\norm{u_0}_{\ell^\alpha(\mathbb Z^N)}^{p_\alpha}
},
\]
and thus \(C_{N,\alpha}\) is attained.
\end{proof}

\subsection*{Conflict of interest}

The authors declare no conflict of interest.

\subsection*{Ethics approval}
 Not applicable.

\subsection*{Data Availability Statements}
Data sharing not applicable to this article as no datasets were generated or analysed during the current study.

\subsection*{Acknowledgements}
 C. Ji was supported by National Natural Science Foundation of China (No. 12571117).

\end{document}